\date{20 December, 2013}
\newcommand{\im}{\mathrm{Im}\,}         
\newcommand{\Z}{\mathbb{Z}}
\newcommand{\id}{\mathrm{Id}}   
\newcommand{\la}{\langle}
\newcommand{\ra}{\rangle}
\newcommand{\x}{\times}
\DeclareMathOperator{\ad}{ad}
\DeclareMathOperator{\coker}{coker}
\theoremstyle{plain}
\newtheorem{proposition}{Proposition}
\newtheorem{theorem}[proposition]{Theorem}
\newtheorem{lemma}[proposition]{Lemma}
\newtheorem{corollary}[proposition]{Corollary}
\theoremstyle{definition}
\newtheorem{definition}[proposition]{Definition}
\theoremstyle{remark}
\newtheorem{remark}[proposition]{Remark}
\newcommand{\bR}{\mathbb{R}}
\newcommand{\fg}{\mathfrak{g}}
\newcommand{\fh}{\mathfrak{h}}
\begin{document}

\title{Non-formal Co-symplectic Manifolds}
\date{21 December, 2013}

\author[G. Bazzoni]{Giovanni Bazzoni}
\address{Instituto de Ciencias Matem\'aticas
CSIC-UAM-UC3M-UCM, Consejo Superior de Investigaciones Cient\'{\i}ficas,
C/ Nicolas Cabrera 13-15, 28049 Madrid, Spain}

\email{gbazzoni@icmat.es}

\author[M. Fern\'andez]{Marisa Fern\'{a}ndez}
\address{Universidad del Pa\'{\i}s Vasco,
Facultad de Ciencia y Tecnolog\'{\i}a, Departamento de Matem\'aticas,
Apartado 644, 48080 Bilbao, Spain}

\email{marisa.fernandez@ehu.es}

\author[V. Mu\~{n}oz]{Vicente Mu\~{n}oz}
\address{Facultad de Ciencias Matem\'aticas, Universidad
Complutense de Madrid, Plaza de Ciencias
3, 28040 Madrid, Spain}

\email{vicente.munoz@mat.ucm.es}

\begin{abstract}
We study the formality of the mapping torus of
an orientation-preserving
diffeomorphism of a manifold. In particular, we give
conditions under which a mapping torus has a non-zero Massey product.
As an application we prove that there are non-formal compact co-symplectic
manifolds of dimension $m$ and with first Betti number $b$ if and only if
$m=3$ and $b \geq 2$, or $m \geq 5$ and $b \geq 1$.
Explicit examples for each one of these cases are given.
\end{abstract}

\maketitle

\vskip1pt{\small\textbf{MSC classification [2010]}: Primary 53C15, 55S30;
Secondary 53D35, 55P62, 57R17.}\vskip0.5pt
\vskip1pt{\small\textbf{Key words}: co-symplectic manifold, mapping torus,
minimal model, formal manifold.}\vskip10pt

%
%

\section{Introduction}\label{Introduction}

In this paper we follow the nomenclature of \cite{Li}, where \emph{co-symplectic} manifolds
are the odd-dimensional counterpart to symplectic manifolds. In terms of differential
forms, a \emph{co-symplectic structure} on a $(2n+1)$-dimensional manifold $M$
is determined by a pair $(F,\eta)$ of closed differential forms, where $F$ is  a $2$-form
and $\eta$ is a $1$-form such that $\eta\wedge F^n$ is a volume form, so that $M$
is orientable. In this case, we say that $(M,F,\eta)$ is a \emph{co-symplectic manifold}.
Earlier, such a manifold was called cosymplectic 
 by Libermann \cite{Lib}, or almost-cosymplectic by Goldberg and Yano \cite{GY}.

The simplest examples of co-symplectic manifolds are the manifolds
called \emph{co-K\"ahler} by Li in \cite{Li}, or {cosymplectic} by Blair
\cite{Bl1}. Such a manifold is locally a product of a K\"ahler manifold with a
circle or a line. In fact, a \emph{co-K\"ahler structure} on a $(2n+1)$-dimensional
manifold $M$ is a normal almost contact metric structure $(\phi, \eta, \xi, g)$ on $M$,
that is, a tensor field $\phi$ of type $(1,1)$, a $1$-form
$\eta$, a vector field $\xi$ (the Reeb vector field) with $\eta(\xi)=1$, and a Riemannian
metric $g$ satisfying certain conditions (see section \ref{almost-contact-metric} for details)
such that the $1$-form $\eta$ and the fundamental $2$-form $F$ given by
$F(X,Y)= g(\phi X, Y)$,  for any vector fields $X$ and $Y$ on $M$, are closed.

The topological description of co-symplectic and co-K\"ahler manifolds is due
to Li \cite{Li}. There he proves that a compact manifold $M$ has a co-symplectic
structure if and only if $M$ is the mapping torus of a symplectomorphism
of a symplectic manifold, while $M$ has a co-K\"ahler structure if and only if $M$
is a K\"ahler mapping torus, that is, $M$ is the mapping torus of a Hermitian
isometry on a K\"ahler manifold. This result may be considered an extension
to co-symplectic and co-K\"ahler manifolds of Tischler's Theorem \cite{Tischler} that
asserts that a compact manifold is a mapping torus if and only if it admits a
non-vanishing closed $1$-form.

The existence of a co-K\"ahler structure on a manifold $M$
imposes strong restrictions on the underlying topology of $M$. Indeed, since
co-K\"ahler manifolds are odd-dimensional analogues of K\"ahler manifolds,
several known results from K\"ahler geometry carry over to co-K\"ahler manifolds. In
particular, every compact co-K\"ahler manifold is formal. Another similarity
is the monotone property for the Betti numbers of compact co-K\"ahler manifolds
\cite{CdLM}.

Intuitively, a simply connected manifold is formal if its rational homotopy type
is determined by its rational cohomology algebra. Simply connected compact manifolds
of dimension less than or equal to $6$ are formal \cite{{FM2},{NM}}. We shall say
that $M$ is {\it formal\/} if its minimal model is formal or, equivalently, if the de Rham
complex $(\Omega M,d)$ of $M$ and the algebra of the de Rham cohomology
$(H^*(M),d=0)$ have the same minimal model (see section \ref{Formality-Massey} for details).

It is well known that the existence of a non-zero Massey product is an obstruction
to formality. In \cite{FM2} the concept of formality is extended to a weaker notion
called $s$-formality. There, the second and third authors prove that an orientable
compact connected manifold, of dimension $2n$ or $2n-1$, is formal if and
only if it is $(n-1)$-formal.

The importance of formality in symplectic geometry stems from the fact that it allows to
distinguish symplectic manifolds which admit K\"ahler
structures from those which do not \cite{{CFG}, {FM3}, {OT}}. It seems thus interesting
to analyze what happens for co-symplectic manifolds. In this paper we consider the
following problem on the {\em geography} of co-symplectic manifolds:

\begin{quote}
  For which pairs $(m=2n+1,b)$, with $n,b \geq 1$, are there compact co-symplectic
  manifolds of dimension $m$ and with $b_1=b$ which are non-formal?
\end{quote}

We address this question in section \ref{sec:5}. It will turn out that the answer is the same
as for compact manifolds \cite{FM1}, i.e, that there are always non-formal examples
except for $(m,b)=(3,1)$.

On any compact co-symplectic manifold $M$, the first Betti number must
satisfy $b_1(M)\geq 1$, since the $(2n+1)$-form  $\eta\wedge F^n$ defines a non-zero
cohomology class on $M$, and hence $\eta$ defines a cohomology class $[\eta]\neq 0$.
It is known that any orientable compact manifold of dimension $\leq 4$ and with first Betti
number $b_1=1$ is formal \cite{FM1}.

The main problem in order to answer the question above is to construct examples of non-formal
compact co-symplectic manifolds of dimension $m=3$ with $b_1 \geq 3$
as well as examples of dimension $m=5$ with $b_1=1$. The other cases
are covered in section \ref{sec:5}, using essentially the $3$-dimensional Heisenberg manifold
to obtain non-formal co-symplectic manifolds of dimension $m\geq 3$ and with
$b_1=2$ as well as non-formal co-symplectic manifolds of dimension $m\geq 5$ and with
$b_1\geq 2$,
or from the non-formal compact simply connected symplectic manifold
of dimension $8$ given in \cite{FM3} to exhibit non-formal co-symplectic manifolds
of dimension $m\geq 9$ and with $b_1=1$.

To fill the gaps, we study in section \ref{non-formality-mapping-torus} the
formality of a (not necessarily symplectic) mapping torus $N_{\varphi}$ obtained from $N\times[0,1]$ by identifying $N\times\{0\}$ with $\varphi(N)\times\{1\}$, where $\varphi$ is a self-diffeomorphism of $N$. The description of a minimal
model for a mapping torus can be very complicated even for low degrees. Nevertheless, in
Theorem \ref{thm:minimal-model-mapping-torus} we determine a minimal model
of $N_{\varphi}$ up to some degree $p\geq 2$
when $\varphi$ satisfies some extra conditions, namely that
the map induced on cohomology $\varphi^*: H^k(N) \to H^k(N)$ does not have
the eigenvalue $\lambda=1$, for any $k \leq (p-1)$, but
$\varphi^*: H^p(N) \to H^p(N)$ has the eigenvalue $\lambda=1$ with multiplicity $r\geq 1$.
In particular (see Corollary \ref{mapping-torus-p-formal}), we show that if  $r=1$,
$N_{\varphi}$ is $p$-formal in the sense mentioned above.

Moreover, in Theorem \ref{thm:19} we prove that $N_{\varphi}$ has
a non-zero (triple) Massey product if there exists $p>0$ such that the map
 $$
 \varphi^*:H^p(N)\to H^p(N).
 $$ 
has the eigenvalue $\lambda=1$ with multiplicity 2. In fact, we show that the Massey product  $\la  [dt], [dt],[\tilde\alpha]\ra$
is well-defined on $N_{\varphi}$ and it does not vanish, where
$dt$ is the $1$-form defined on $N_{\varphi}$ by the volume
form on $S^1$, and $[\tilde\alpha]  \in H^p(N_{\varphi})$ is the cohomology class
induced on $N_{\varphi}$ by a certain cohomology class $[\alpha] \in H^p(N)$ fixed by $\varphi^*$.

Regarding symplectic mapping torus manifolds, first we
notice that if $N$ is a compact symplectic $2n$-manifold, and
$\varphi:N\to N$ is a symplectomorphism, then the map induced on cohomology
$\varphi^*: H^2(N) \to H^2(N)$ always has the eigenvalue $\lambda=1$.
As a consequence of Theorem \ref{thm:minimal-model-mapping-torus}, we get that
 if $N_{\varphi}$ is a symplectic mapping torus such that the map
 $\varphi^*: H^1(N) \to H^1(N)$ does not have the eigenvalue
$\lambda=1$, then $N_{\varphi}$ is $2$-formal if and only if
the eigenvalue $\lambda=1$ of $\varphi^*: H^2(N) \to H^2(N)$ has multiplicity $r=1$.
Thus, in these conditions, the co-symplectic manifold $N_{\varphi}$
is formal when $N$ has dimension four.

In section \ref{sec:5}, using Theorem \ref{thm:19}, we solve the case $m=3$ with $b_1\geq 3$
taking the mapping torus of a symplectomorphism of  a surface of genus $k\geq 2$
(see Proposition  \ref{m=3}).
For  $m=5$ and $b_1=1$ we consider the mapping torus of a
symplectomorphism of a 4-torus (see Proposition  \ref{prop:3}).

Let $G$ be a connected, simply connected solvable Lie group,
and let $\Gamma\subset G$ be a discrete, cocompact subgroup.
Then $M=\Gamma\backslash G$ is a solvmanifold. The manifold
constructed in Proposition \ref{prop:3} is not
a solvmanifold according to our definition. However, it is the quotient
of a solvable Lie group by a closed subgroup. In section \ref{sec:6} we
present an explicit example of a non-formal compact
co-symplectic $5$-dimensional manifold $S$, with first Betti number
$b_1(S)=1$, which is a solvmanifold. We describe $S$ as the mapping torus
of a symplectomorphism of a $4$-torus, so
this example fits in the scope of Proposition \ref{prop:3}.

%
%

\section{Minimal models and formality} \label{Formality-Massey}

In this section we
recall some fundamental facts of the theory of minimal models. For more details, see \cite{DGMS}, \cite{FHT} and \cite{FOT}.

We work over the field $\bR$ of real numbers. Recall that a \textit{commutative differential graded algebra} $(A,d)$ (CDGA for short) is a graded algebra $A=\oplus_{k\geq 0}A^k$ which is graded commutative, i.e. $x\cdot y=(-1)^{|x||y|}y\cdot x$ for homogeneous elements $x$ and $y$, together with a differential $d:A^k\to A^{k+1}$ such that $d^2=0$ and
$d(x\cdot y)=dx\cdot y+(-1)^{|x|}x\cdot dy$ (here $|x|$ denotes the degree of the homogeneous element $x$).

Morphisms of CDGAs are required to preserve the degree and to commute with the differential. Notice that the cohomology of a CDGA is an algebra which can be turned into a CDGA by endowing it with the
zero differential. A CDGA is said to be \textit{connected} if $H^0(A,d)\cong\bR$. The main example of CDGA
is the de Rham complex of a smooth manifold $M$, $(\Omega^*(M),d)$, where $d$ is the exterior differential.

A CDGA $(A,d)$ is said to be \textit{minimal} (or \textit{Sullivan}) if the following happens:
\begin{itemize}
\item $A=\bigwedge V$ is the free commutative algebra generated by a graded (real) vector space $V=\oplus_kV^k$;
\item there exists a basis $\{x_i, \ i\in\mathcal{J}\}$ of $V$, for a well-ordered index set $\mathcal{J}$, such that
$|x_i|\leq\ |x_j|$ if $i<j$ and the differential of a generator $x_j$ is expressed in terms of the preceding $x_i$ ($i<j$); in particular, $dx_j$ does not have a linear part.
\end{itemize}

We have the following fundamental result:
\begin{proposition}
Every connected CDGA $(A,d)$ has a minimal model, that is, there exist a
minimal algebra $(\bigwedge V,d)$ together with a morphism of CDGAs $\varphi:(\bigwedge V,d)\to (A,d)$ which induces an isomorphism $\varphi^*:H^*(\bigwedge V,d)\to
H^*(A,d)$. The minimal model is unique.
\end{proposition}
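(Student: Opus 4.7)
The plan is to build the minimal model by induction on degree and then establish uniqueness via a standard lifting/homotopy argument. Existence is constructive; uniqueness uses the obstruction theory available for free graded commutative algebras.

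\textbf{Existence.} I would proceed inductively, producing a tower of minimal CDGAs $(\bigwedge V_{\leq k}, d)$ together with morphisms $\varphi_k: (\bigwedge V_{\leq k}, d) \to (A,d)$ such that $\varphi_k^*: H^i(\bigwedge V_{\leq k}) \to H^i(A)$ is an isomorphism for $i \leq k$ and is injective for $i = k+1$. To start, since $(A,d)$ is connected, put $V^0 = 0$ and set $V^1$ to be a graded vector space with basis indexed by a chosen basis of $H^1(A)$; define $d = 0$ on $V^1$ and send each generator to a chosen closed representative of its cohomology class. Inductively, given $\varphi_k$, I would enlarge to degree $k+1$ in two stages: first, adjoin a vector space $C^{k+1}$ of closed generators with $\varphi$ sending them to representatives of a complement of the image of $\varphi_k^*$ in $H^{k+1}(A)$, so that the induced map becomes surjective in degree $k+1$; second, adjoin a vector space $K^{k+1}$ with differentials hitting a basis of $\ker\bigl(\varphi_k^*: H^{k+2}(\bigwedge V_{\leq k}) \to H^{k+2}(A)\bigr)$ and $\varphi$ sending these new generators to any primitives of the corresponding exact forms in $A$. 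Setting $V^{k+1} = C^{k+1} \oplus K^{k+1}$ extends $\varphi_k$ to $\varphi_{k+1}$ with the desired properties, and by construction $d$ on the new generators lies in $\bigwedge^{\geq 2} V_{\leq k}$, so minimality is preserved. The union $\bigwedge V = \bigcup_k \bigwedge V_{\leq k}$ with the limiting morphism is the desired minimal model.

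\textbf{Uniqueness.} Suppose $\varphi: (\bigwedge V, d) \to (A,d)$ and $\psi: (\bigwedge W, d) \to (A,d)$ are two minimal models. I would build an isomorphism $f: (\bigwedge V, d) \to (\bigwedge W, d)$ with $\psi \circ f$ homotopic to $\varphi$, inductively on the Sullivan filtration. The key technical tool is the standard notion of homotopy between CDGA morphisms, realised through the cylinder object $(\bigwedge W) \otimes \bigwedge(t, dt)$ with $|t|=0$: any surjective quasi-isomorphism has the right lifting property against the inclusion of a minimal algebra into one of its Hirsch extensions. Concretely, at each inductive step one must define $f$ on a new generator $x \in V^{k+1}$, where the constraint is that $df(x) = f(dx)$ is already determined (with $f(dx)$ a closed element of $\bigwedge W_{\leq k}$), and $\psi \circ f(x)$ must represent the same cohomology class as $\varphi(x)$ in $A$. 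Since $\psi$ is a quasi-isomorphism, such an element $f(x)$ exists, and the discrepancy between $\psi \circ f(x)$ and $\varphi(x)$ is controlled by exactness in $A$, allowing one to adjust the homotopy accordingly. That $f$ is then an isomorphism follows because both $\varphi$ and $\psi \circ f$ induce the same isomorphism on cohomology, whence $f$ is a quasi-isomorphism between minimal algebras, hence an isomorphism (this last step uses that a quasi-isomorphism of minimal CDGAs is automatically an isomorphism, which is a direct consequence of the absence of linear terms in the differentials).

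\textbf{Main obstacle.} The delicate point is the uniqueness argument: setting up the homotopy-theoretic machinery (cylinder object, lifting property, the fact that a quasi-isomorphism between minimal algebras is an isomorphism) cleanly, and verifying at the inductive step that the class of $\varphi(x) - \psi f(x)$ in $A$ can be realised as a coboundary compatible with the already-constructed homotopy on $\bigwedge V_{\leq k}$. The existence part is comparatively routine linear algebra once the inductive invariant is fixed.
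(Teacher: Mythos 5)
The paper does not actually prove this proposition: it is quoted as a foundational fact with references to \cite{DGMS}, \cite{FHT} and \cite{FOT}, so there is no in-paper argument to compare yours against. Assessed on its own terms, your sketch is the standard Sullivan construction, and the uniqueness part is essentially correct in outline (cylinder object, lifting against quasi-isomorphisms, and the fact that a quasi-isomorphism between minimal algebras is an isomorphism), modulo the usual care that $\psi$ need not be surjective, so the lifting must be performed up to homotopy.

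The existence argument, however, has a genuine gap precisely at the level of generality the proposition asserts, namely for connected but not simply connected CDGAs. With $V^1$ consisting only of closed generators representing $H^1(A)$, one has $H^2(\bigwedge V^1)=\bigwedge^2 V^1\cong \bigwedge^2 H^1(A)$, mapping to $H^2(A)$ by the cup product; this is not injective in general (for the de Rham algebra of the $3$-dimensional Heisenberg nilmanifold, $[e^1]\cdot[e^2]=0$ in $H^2$), so your inductive invariant ``injective in degree $k+1$'' already fails at the base case $k=1$. Repairing it forces you to adjoin degree-one generators with nonzero differential, and one pass does not suffice: each new batch of such generators creates new kernel classes in degree $2$, so $V^1$ must be built as a countable increasing union (this is the dual of the lower central series of $\pi_1$, and is exactly why the paper's definition of a minimal algebra uses a well-ordered index set rather than ordering generators by degree alone). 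The same phenomenon recurs at every stage once $V^1\neq 0$: after adjoining $K^{k+1}$, products $x\cdot v$ with $x\in V^1$ and $v\in K^{k+1}$ live in degree $k+2$ and can contribute new classes in the kernel of $H^{k+2}(\bigwedge V_{\leq k+1})\to H^{k+2}(A)$, so the two-stage extension per degree must be replaced by a (possibly infinite) sequence of Hirsch extensions in each degree. Your argument as written is the correct proof only in the simply connected case; since all the applications in this paper (mapping tori, nilmanifolds, solvmanifolds) are non-simply-connected, the general connected case is the one that matters here.
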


The (real) \textit{minimal model} of a differentiable manifold $M$ is by definition
the minimal model of its de Rham algebra $(\Omega^*(M),d)$.

Recall that a minimal algebra $(\bigwedge V,d)$ is {\it formal} if there exists a
morphism of differential algebras $\psi\colon {(\bigwedge V,d)}\longrightarrow
(H^*(\bigwedge V),0)$ that induces the identity on cohomology.
Also a differentiable manifold $M$ is {\it formal\/} if its minimal model is
formal. Many examples of formal manifolds are known: spheres, projective
spaces, compact Lie groups, homogeneous spaces, flag manifolds,
and compact K\"ahler manifolds.

In \cite{DGMS}, the formality of a minimal algebra is characterized as follows.

\begin{proposition} \label{prop:criterio1}
A minimal algebra $(\bigwedge V,d)$ is formal if and only if the space $V$ can be
decomposed as a direct sum $V= C\oplus
N$ with $d(C) = 0$, $d$ injective on $N$ and such that every
closed element in the ideal $I(N)$ generated by $N$ in $\bigwedge
V$ is exact.
\end{proposition}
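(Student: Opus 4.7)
The plan is to prove the two implications separately. The \emph{if} direction admits a direct construction, while the \emph{only if} direction requires a delicate modification of an arbitrary formality morphism.

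For the ``if'' direction, given a decomposition $V=C\oplus N$ with the stated properties, I would define a candidate morphism $\psi\colon (\bigwedge V,d)\to (H^*(\bigwedge V),0)$ by setting $\psi(c)=[c]$ for $c\in C$, $\psi(n)=0$ for $n\in N$, and extending multiplicatively to $\bigwedge V$. The only nontrivial check is that $\psi$ intertwines the differentials, namely that $\psi(dn)=0$ for $n\in N$. Since $dn$ is closed, write its unique decomposition $dn=a+b$ with $a\in\bigwedge C$ and $b\in I(N)$. Because $d$ vanishes identically on $\bigwedge C$ (by Leibniz applied to $d|_C=0$), we get $da=0$, and therefore $db=-da=0$, so $b$ is a closed element of $I(N)$. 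The hypothesis provides some $e$ with $b=de$, whence $a=dn-b=d(n-e)$ is exact as well. Thus $\psi(dn)=\psi(a)+\psi(b)=[a]+0=0$. The same decomposition applied to an arbitrary closed $z\in\bigwedge V$ shows $[z]=[a]$ in $H^*(\bigwedge V)$, so $\psi^{*}([z])=[a]=[z]$ and $\psi$ is a formality morphism.

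For the ``only if'' direction, I would choose $C=\ker(d|_V)$ (the closed generators) and $N$ any graded vector space complement in $V$, so that $d|_C=0$ and the injectivity of $d|_N$ are automatic. The substance of the direction is to show that every closed element of $I(N)$ is exact. My strategy is to produce a formality morphism $\psi\colon (\bigwedge V,d)\to (H^{*},0)$ with the extra property that $\psi|_N=0$: once such a $\psi$ is in hand, any closed $z\in I(N)$ satisfies $\psi(z)=0$ by multiplicativity, so $[z]=\psi^{*}([z])=0$ and $z$ is exact.

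The construction of this refined $\psi$ is the main obstacle. Starting from an arbitrary formality morphism $\psi_0$, I would proceed by induction along the Sullivan filtration $V(0)\subset V(1)\subset\cdots$ of the minimal model. At each step, for every new generator $n\in N\cap V(k+1)$ one must adjust the morphism by a chain homotopy so that its value on $n$ becomes zero, while preserving compatibility with $dn\in\bigwedge V(k)$. The obstruction to carrying out this adjustment lies in the cohomology of the target; but since $\psi_0(dn)=d\psi_0(n)=0$ in the zero-differential algebra $H^{*}$, that obstruction automatically vanishes, and the correction can be absorbed into a homotopy inductively. Once the induction has exhausted all the generators, the resulting $\psi$ satisfies $\psi|_N=0$ and still induces the identity on cohomology, and the desired conclusion follows as above.
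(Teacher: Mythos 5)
The paper itself offers no proof of this proposition (it is quoted from \cite{DGMS}), so the comparison is with the classical argument. Your ``if'' direction is correct and is essentially that argument: using $\bigwedge V=\bigwedge C\oplus I(N)$ you check that the multiplicative extension of $c\mapsto[c]$, $n\mapsto 0$ is a chain map and induces the identity on cohomology.

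The ``only if'' direction, however, has a genuine gap. You take $N$ to be an \emph{arbitrary} complement of $C=\ker(d|_V)$ and claim that any formality morphism can be corrected, inductively and by chain homotopies, until it vanishes on $N$. This cannot succeed, because the conclusion of the proposition actually fails for a badly chosen complement even when the algebra is formal: the statement is existential in $N$, and the substance of this direction is precisely the construction of a complement adapted to the quasi-isomorphism. Concretely, let $(\bigwedge V,d)$ be the minimal model of the formal manifold $\Sigma_2\times S^2$. Then $V^1=\la a_1,a_2,b_1,b_2\ra$ consists of closed generators, and $V^2=\la x\ra\oplus\la n_1,\dots,n_5\ra$, where $x$ is the closed generator corresponding to $H^2(S^2)$ and $dn_1=a_1a_2$, etc. Here $C^2=\la x\ra$, and $N'^2=\la n_1+x,n_2,\dots,n_5\ra$ is a legitimate complement on which $d$ is injective. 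The element $(n_1+x)a_1\in I(N')$ is closed (since $a_1a_2a_1=0$), and its class equals $[n_1a_1]+[xa_1]=[x][a_1]\neq 0$ by K\"unneth, because $n_1a_1$ is exact ($H^3$ of the model of $\Sigma_2$ vanishes) while $[x][a_1]$ is not. Hence $I(N')$ contains a closed non-exact element, and no quasi-isomorphism $\psi$ inducing the identity on cohomology can satisfy $\psi(n_1+x)=0$, since that would force $0=\psi\big((n_1+x)a_1\big)=\psi^*\big[(n_1+x)a_1\big]\neq 0$. The precise point where your induction breaks is the claim that the obstruction to killing $\psi(n)$ ``automatically vanishes'' because $\psi_0(dn)=0$: that identity only guarantees that redefining $\psi(n)$ keeps a chain map; it does not guarantee that the modified map is homotopic to $\psi_0$, i.e.\ still induces the identity on cohomology, and in fact $\psi(n)$ can only be altered within the coset determined by applying the homotopy data to $dn$. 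A correct proof must instead build $N^i$ inside $\ker\psi$ (after arranging $V^i=C^i+(V^i\cap\ker\psi)$, which itself needs an argument), or equivalently invoke the bigraded model of $(H^*,0)$ and the uniqueness of minimal models.
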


This characterization of formality can be weakened using the concept of
$s$-formality introduced in \cite{FM2}.

\begin{definition}\label{def:primera}
A minimal algebra $(\bigwedge V,d)$ is $s$-formal
($s> 0$) if for each $i\leq s$
the space $V^i$ of generators of degree $i$ decomposes as a direct
sum $V^i=C^i\oplus N^i$, where the spaces $C^i$ and $N^i$ satisfy
the three following conditions:
\begin{enumerate}

\item $d(C^i) = 0$,

\item the differential map $d\colon N^i\rightarrow \bigwedge V$ is
injective,

\item any closed element in the ideal
$I_s=I(\bigoplus\limits_{i\leq s} N^i)$, generated by the space
$\bigoplus\limits_{i\leq s} N^i$ in the free algebra $\bigwedge
(\bigoplus\limits_{i\leq s} V^i)$, is exact in $\bigwedge V$.

\end{enumerate}

\end{definition}

A smooth manifold $M$ is $s$-formal if its minimal model
is $s$-formal. Clearly, if $M$ is formal then $M$ is $s$-formal, for any $s>0$.
The main result of \cite{FM2} shows that sometimes the weaker
condition of $s$-formality implies formality.

\begin{theorem} \label{fm2:criterio2}
Let $M$ be a connected and orientable compact differentiable
manifold of dimension $2n$, or $(2n-1)$. Then $M$ is formal if and
only if is $(n-1)$-formal.
\end{theorem}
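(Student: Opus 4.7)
The forward implication is immediate from the definitions: a formality decomposition $V = C \oplus N$ satisfying the criterion of Proposition \ref{prop:criterio1} restricts in each degree $i \leq n-1$ to witness $(n-1)$-formality, because the ideal $I_{n-1}$ is contained in $I(N)$ and so closed elements of the former are already exact in $\bigwedge V$.

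For the converse, assume $M$ is $(n-1)$-formal and work with its minimal model $(\bigwedge V, d)$, equipped with the decomposition $V^i = C^i \oplus N^i$ for $i \leq n-1$ provided by the hypothesis. The plan is to extend this to a decomposition of the whole generating space and then verify the criterion of Proposition \ref{prop:criterio1}. The extension is a purely algebraic choice: for each $i \geq n$ pick a subspace $C^i \subseteq \ker d$ mapping onto $H^i(\bigwedge V) = H^i(M)$ and a complement $N^i$ on which $d$ is injective, which is always available in a minimal algebra. It then remains to show that every closed $\zeta \in I(N)$ is exact.

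I would split the argument according to $k = \deg \zeta$. For $k \leq n-1$, every monomial summand of $\zeta$ has all its factors in degrees $\leq n-1$, so in fact $\zeta \in I_{n-1}$, and condition (3) of Definition \ref{def:primera} for $s = n-1$ gives exactness. For $k \geq n$, the natural tool is Poincar\'e duality on $M$. In dimension $2n-1$ the complementary degree $2n-1-k$ is at most $n-1$, so this case essentially reduces to the low-degree regime. In dimension $2n$, vanishing of $[\zeta]$ in $H^k(M)$ is equivalent to $[\zeta] \cup [\omega] = 0$ for all $[\omega] \in H^{2n-k}(M)$; for $k \geq n+1$ one has $2n-k \leq n-1$, so one represents $\omega$ by an element of $\bigwedge V^{\leq n-1}$, decomposes it along $C^{\leq n-1} \oplus N^{\leq n-1}$, and uses $(n-1)$-formality to replace its $N$-part by an exact correction. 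This reduces the exactness of $\zeta \wedge \omega \in I(N)$ to the already-handled low-degree case.

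The main obstacle I anticipate is the middle degree $k = n$ in the even-dimensional case, where both $\zeta$ and its duality partner $\omega$ sit precisely at the boundary of the range controlled by $(n-1)$-formality. To handle this, I would exploit the symmetry of the Poincar\'e pairing: applying $(n-1)$-formality to a representative of $[\omega] \in H^n(M)$ (writing $\omega$ modulo an exact term as a sum of products of classes coming from $C^{\leq n-1}$), together with a careful choice of the extension of $C$ and $N$ in degrees $\geq n$ so that the chosen $C^n$ contains no spurious contributions from $I(N)$, should show that $\zeta \wedge \omega$ lies in $I_{n-1}$ up to exactness and hence vanishes in cohomology. Keeping the decomposition coherent at this boundary is the technically delicate step of the proof.
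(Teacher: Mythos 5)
This theorem is not proved in the paper at all: it is quoted as the main result of \cite{FM2}, so there is no internal proof to compare your attempt against. You are in effect trying to reprove the main theorem of that other paper, and your sketch does not close the essential gap. The trivial direction and the case $k=\deg\zeta\le n-1$ of the converse are fine (a degree-$k$ monomial only involves generators of degree $\le k$, so a closed $\zeta\in I(N)$ of degree $\le n-1$ lies in $I_{n-1}$ and condition (3) applies).

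The gap is in the reduction for $k\ge n$. You invoke Poincar\'e duality to replace ``$[\zeta]=0$ in $H^k(M)$'' by ``$[\zeta]\cup[\omega]=0$ for all $[\omega]\in H^{m-k}(M)$'', represent $\omega$ modulo exact terms by an element of $\bigwedge C^{\le n-1}$, and then assert that this ``reduces the exactness of $\zeta\wedge\omega\in I(N)$ to the already-handled low-degree case.'' It does not: $\zeta\wedge\omega$ is a closed element of $I(N)$ of degree $m=2n$ or $2n-1$, i.e.\ of \emph{top} degree, while the case you have actually handled is degree $\le n-1$. The argument is therefore circular --- you have traded one instance of ``closed elements of $I(N)$ in degree $\ge n$ are exact'' for another such instance. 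The genuinely hard content of the theorem lives exactly here: $(n-1)$-formality says nothing about the generators $N^i$ with $i\ge n$ that your extension of the decomposition introduces, and one must show that closed elements of the ideal they generate cannot carry nonzero classes up to the top degree. The published proof in \cite{FM2} handles this by combining the duality pairing with a degree count (a monomial of degree $\le m$ containing a factor from $N^{\ge n}$ has complementary factors of degree $\le n$, resp.\ $\le n-1$) and with the lemma that every class of degree $\le n-1$ is represented in $\bigwedge C^{\le n-1}$; none of that analysis appears in your sketch. Your final paragraph correctly senses that something is delicate at $k=n$ in the even-dimensional case, but the circularity affects every $k\ge n$, and ``a careful choice of the extension of $C$ and $N$'' is not, by itself, a mechanism that resolves it.
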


In order to detect non-formality, instead of computing the minimal
model, which usually is a lengthy process, we can use Massey
products, which are obstructions to formality. Let us recall their
definition. The simplest type of Massey product is the triple
Massey product. Let $(A,d)$ be a CDGA and suppose $a,b,c\in H^*(A)$ are
three cohomology classes such that $a\cdot b=b\cdot c=0$.
Take cocycles $x$, $y$ and $z$ representing these cohomology classes
and let $s$, $t$ be elements of $A$ such that
$$
ds=(-1)^{|x|}x\cdot y, \quad dt=(-1)^{|y|}y\cdot z.
$$
Then one checks that
$$
w=(-1)^{|x|}x\cdot t+(-1)^{|x|+|y|-1}s\cdot z
$$
is a cocyle. The choice of different representatives gives an indeterminacy,
represented by the space
$$
\mathcal{I}=a\cdot H^{|y|+|z|-1}(A)+H^{|x|+|y|-1}(A)\cdot c.
$$
We denote by $\langle a,b,c\rangle$ the image of the cocycle $w$ in
$H^*(A)/\mathcal{I}$. As is proven in \cite{DGMS} (and which is essentially
equivalent to Proposition \ref{prop:criterio1}),
if a minimal CDGA is formal, then one can make uniform choices of cocyles
so that the classes representing (triple) Massey products are exact.
In particular, if the real minimal model of a
manifold contains a non-zero Massey product, then the manifold is not formal.

%
%
%

\section{Co-symplectic manifolds}\label{almost-contact-metric}

In this section we recall some definitions and results about co-symplectic
manifolds, and we extend to co-symplectic Lie algebras the result
of Fino-Vezzoni \cite{FV} for co-K\"ahler Lie algebras.

\begin{definition}
Let $M$ be a $(2n+1)$-dimensional manifold. An \emph{almost contact metric structure} on $M$ consists
of a quadruplet $(\phi,\xi,\eta,g)$, where
$\phi$ is an endomorphism of the tangent bundle $TM$, $\xi$  is a vector field,  $\eta$ is
a $1$-form and  $g$ is a Riemannian metric on $M$ satisfying the conditions
 \begin{equation} \label{eqn:almost-contact}
\phi^2 = -\id + \eta \otimes \xi, \quad  \quad  \eta(\xi)=1, \quad  \quad
g(\phi X, \phi Y) = g(X,Y) - \eta(X) \eta(Y),
\end{equation}
for $X,Y\in\Gamma(TM)$.
\end{definition}

Thus, $\phi$ maps the distribution $\ker(\eta)$ to itself and satisfies $\phi(\xi)=0$. We call
$(M,\phi,\eta,\xi,g)$ an \textit{almost contact metric} manifold. The \emph{fundamental} $2$-form $F$ on $M$ is defined by
$$
F(X,Y) = g(\phi X, Y),
$$
for $X,Y\in\Gamma(TM)$.

Therefore, if $(\phi,\xi,\eta,g)$ is an almost contact metric structure on $M$
with fundamental $2$-form $F$, then $\eta \wedge F^n\not=0$ everywhere.
Conversely (see \cite{Bl3}), if $M$ is a differentiable manifold of dimension $2n+1$
with a $2$-form $F$ and a $1$-form $\eta$ such that $\eta \wedge F^n$
is a volume form on $M$,  then there exists an almost contact metric structure
$(\phi,\xi, \eta, g)$ on $M$ having $F$ as the fundamental form.

There are different classes of structures that can be considered on $M$ in terms
of $F$ and $\eta$ and their covariant derivatives. We recall here those
that are needed in the present paper:

\begin{itemize}
\item $M$ is \emph{co-symplectic} iff $dF = d\eta=0$;
\item $M$ is \emph{normal} iff the Nijenhuis torsion $N_{\phi}$ satisfies $N_{\phi}=-2 d\eta\otimes \xi$;
\item $M$ is \emph{co-K\"ahler} iff it is normal and co-symplectic or, equivalently, $\phi$ is parallel,
\end{itemize}
where the Nijenhuis torsion $N_{\phi}$ is given by
$$
N_{\phi}(X,Y) = \phi^2[X,Y] + [\phi X, \phi Y] - \phi [\phi X, Y] - \phi [X, \phi Y],
$$
for $X,Y\in\Gamma(TM)$.

In the literature, co-symplectic manifolds are often called
\textit{almost cosymplectic}, while co-K\"ahler manifolds are called \textit{cosymplectic}
(see \cite{Bl1,Bl2,CdLM,FV}).

Let us recall that a \emph{symplectic} manifold $(M,\omega)$
is a pair consisting of a $2n$-dimensional differentiable manifold $M$ with a closed
$2$-form $\omega$ which is non-degenerate (that is, $\omega^n$
never vanishes). The form $\omega$ is called symplectic. The following well known result shows that co-symplectic manifolds are really the odd dimensional analogue of
symplectic manifolds; a proof can be found in Proposition 1 of \cite {Li}.

\begin{proposition}
A manifold $M$ admits a co-symplectic structure if and only
 if the product $M\times S^1$ admits an $S^1$-invariant symplectic form.
\end{proposition}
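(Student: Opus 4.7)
The plan is to construct an explicit bijective correspondence between co-symplectic structures on a $(2n+1)$-manifold $M$ and $S^1$-invariant symplectic forms on $M\times S^1$.

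For the forward direction, suppose $(F,\eta)$ is a co-symplectic structure on $M$. Let $dt$ denote the pullback of the standard volume $1$-form on $S^1$. I would define
$$\omega = F + \eta\wedge dt$$
on $M\times S^1$, where $F$ and $\eta$ are understood as pulled back via the projection $M\times S^1 \to M$. Closedness of $\omega$ follows immediately from $dF=0$, $d\eta=0$ and $d(dt)=0$. For non-degeneracy, since $F$ is a $2$-form pulled back from the $(2n+1)$-dimensional manifold $M$, its $(n+1)$-th power vanishes, so the binomial expansion yields $\omega^{n+1}=(n+1)\, \eta\wedge F^n\wedge dt$, which is a volume form on $M\times S^1$ exactly because $\eta\wedge F^n$ is a volume form on $M$. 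Finally, since $F$ and $\eta$ are pulled back from $M$ and $dt$ is translation-invariant, $\omega$ is $S^1$-invariant.

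For the converse, given an $S^1$-invariant symplectic form $\omega$ on $M\times S^1$, let $\xi=\partial_t$ be the infinitesimal generator of the $S^1$-action. The key step is a type decomposition: every $S^1$-invariant differential form on $M\times S^1$ decomposes uniquely as a sum of a form pulled back from $M$ and $dt$ wedged with a form pulled back from $M$. Applying this to $\omega$, one can write
$$\omega = F + \eta\wedge dt,$$
where $F\in \Omega^2(M)$ and $\eta\in\Omega^1(M)$. Since $dF$ has no $dt$ component whereas $d\eta\wedge dt$ does, the equation $d\omega=dF+d\eta\wedge dt=0$ forces $dF=0$ and $d\eta=0$ separately. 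The same binomial computation as above gives $\omega^{n+1}=(n+1)\,\eta\wedge F^n\wedge dt$, so the non-degeneracy of $\omega$ is equivalent to $\eta\wedge F^n$ being a volume form on $M$. Thus $(F,\eta)$ is a co-symplectic structure on $M$.

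The only genuinely non-trivial point is the decomposition of $S^1$-invariant forms used in the backward direction; once this is in hand, both implications reduce to the same bilinear algebra. This is essentially straightforward because $S^1$-invariance together with Cartan's formula $0=\mathcal{L}_\xi\omega=d(\iota_\xi\omega)+\iota_\xi d\omega$ already guarantees that $\iota_\xi\omega$ is closed, and the explicit splitting $\omega=(\omega-\iota_\xi\omega\wedge dt)+\iota_\xi\omega\wedge dt$ singles out the two components, which are $S^1$-invariant and have vanishing contraction with $\xi$, hence descend to $M$.
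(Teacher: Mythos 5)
The paper itself gives no proof of this proposition; it simply cites Proposition~1 of Li's paper, whose argument is exactly the one you give (take $\omega=F+\eta\wedge dt$ in one direction, and split an invariant form into its basic part and its $dt$-part in the other). Your write-up is correct in substance and essentially complete: closedness, the binomial computation $\omega^{n+1}=(n+1)\,\eta\wedge F^n\wedge dt$ using $F^{n+1}=0$ because $F$ is pulled back from the $(2n+1)$-dimensional $M$, and the observation that $d\omega=0$ forces $dF=0$ and $d\eta=0$ separately because the two summands live in complementary pieces of the bigrading, are all fine.

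The only slip is a sign in your final paragraph. With the convention $\omega=F+\eta\wedge dt$ one has $\iota_\xi\omega=-\eta$, and the piece of $\omega$ with vanishing $\xi$-contraction is $\omega-dt\wedge\iota_\xi\omega$, not $\omega-\iota_\xi\omega\wedge dt$: indeed, since $\iota_\xi\iota_\xi\omega=0$ and $\iota_\xi dt=1$, one computes $\iota_\xi\bigl(\omega-\iota_\xi\omega\wedge dt\bigr)=2\,\iota_\xi\omega$, whereas $\iota_\xi\bigl(\omega-dt\wedge\iota_\xi\omega\bigr)=0$. Writing $\eta=-\iota_\xi\omega$ and $F=\omega-dt\wedge\iota_\xi\omega$ (both $S^1$-invariant with zero $\xi$-contraction, hence basic, and $\eta$ closed by Cartan's formula) repairs the splitting, and the rest of your argument goes through unchanged.
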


A theorem by Tischler \cite{Tischler} asserts that a compact manifold is
a mapping torus if and only if it admits a non-vanishing closed $1$-form.
This result was extended recently
to co-symplectic manifolds by Li \cite{Li}. Let us recall first some definitions.

Let $N$ be a differentiable manifold and let $\varphi:N\to N$ be a diffeomorphism.
The {\em mapping torus} $N_{\varphi}$ of $\varphi$ is the
manifold obtained from $N\times[0,1]$ by identifying the ends with $\varphi$, that is
$$
N_{\varphi}= \frac {N\times[0,1]}{(x,0) \sim (\varphi(x),1)}.
$$
It is a differentiable manifold, because it is the quotient of $N \times \bR$
by the infinite cyclic group generated by $(x,t) \to (\varphi(x),t+1)$. The natural
map $\pi \colon N_{\varphi} \to S^1$ defined by $\pi(x,t)=e^{2\pi it}$ is the projection
of a locally trivial fiber bundle.

\begin{definition}
Let $N_{\varphi}$ be a mapping torus of a diffeomorphism $\varphi$ of $N$.
We say that $N_{\varphi}$ is a \textit{symplectic} mapping torus
if  $(N,\omega)$ is a symplectic manifold and $\varphi:N\to N$ a symplectomorphism, that is,
$\varphi^*\omega=\omega$.
\end{definition}

\begin{theorem} [Theorem 1, \cite {Li}]
A compact manifold $M$ admits a co-symplectic structure if and only if it is a symplectic
mapping torus $M=N_{\varphi}$.
\end{theorem}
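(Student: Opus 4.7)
\medskip

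\noindent\textbf{Plan of proof.}

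For the easy direction, assume $M=N_\varphi$ with $(N,\omega)$ symplectic and $\varphi$ a symplectomorphism. I would pull back the volume form of $S^1$ by the projection $\pi\colon N_\varphi\to S^1$ to get a closed, nowhere-vanishing $1$-form $\eta$. For $F$, I would observe that the pullback of $\omega$ to $N\times\mathbb{R}$ via the first projection is invariant under the $\mathbb{Z}$-action $(x,t)\mapsto(\varphi(x),t+1)$ precisely because $\varphi^*\omega=\omega$; hence it descends to a closed $2$-form $F$ on $N_\varphi$. In a local product chart $N\times(t_0-\varepsilon,t_0+\varepsilon)$ one has $F=\omega$ and $\eta=dt$, so $\eta\wedge F^n=dt\wedge\omega^n$, which is a volume form. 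This gives the required co-symplectic pair.

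For the nontrivial direction, suppose $(F,\eta)$ is a co-symplectic structure on a compact $M^{2n+1}$. The plan is to apply Tischler's theorem and then promote the fibration structure to a symplectic mapping torus structure. First I would note that $\eta$ is nowhere zero, since $\eta\wedge F^n$ is a volume form forces $\eta_p\neq0$ pointwise. Next I would apply the approximation step of Tischler: the closed $1$-form $\eta$ can be $C^0$-approximated arbitrarily well by a closed $1$-form $\eta'$ with integral periods. If the approximation is close enough, $\eta'$ is still nowhere zero and $\eta'\wedge F^n$ remains a volume form, so $(F,\eta')$ is again co-symplectic. Integral periods mean $\eta'=\pi^*d\theta$ for a smooth submersion $\pi\colon M\to S^1$, and compactness identifies $M$ with the mapping torus $N_\varphi$ of the first-return map on a fiber $N=\pi^{-1}(\theta_0)$.

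To upgrade this identification to a symplectic mapping torus, I would introduce the Reeb vector field $\xi$ of $(F,\eta')$, defined pointwise by the conditions $\eta'(\xi)=1$ and $i_\xi F=0$. Existence and uniqueness follow because $F$ has rank $2n$ (its kernel is a line bundle, along which $\eta'$ must be nonzero, as the volume condition shows). Using $dF=0$ and $d\eta'=0$, Cartan's formula yields
\[
\mathcal{L}_\xi F = d(i_\xi F)+i_\xi dF = 0,\qquad \mathcal{L}_\xi\eta' = d(i_\xi\eta')+i_\xi d\eta' = 0,
\]
so the flow of $\xi$ preserves both $F$ and $\eta'$. Since $d\pi(\xi)=\partial_\theta$, the time-$1$ flow maps the fiber $N$ back to itself; I would take this diffeomorphism as $\varphi$. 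Finally, because $F|_N$ is closed and $F^n\wedge\eta'$ is a volume form with $\eta'$ transverse to $N$, the restriction $\omega:=F|_N$ is a symplectic form on $N$, and $\varphi^*\omega=\omega$ follows from $\mathcal{L}_\xi F=0$. Hence $M\cong N_\varphi$ as a symplectic mapping torus.

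The main technical obstacle is the Tischler perturbation step: one must check that the perturbed $1$-form $\eta'$ still yields a co-symplectic pair with the \emph{unchanged} $F$, and then that the fiber obtained from $\eta'$ inherits a symplectic form from $F$. Both points rest on the openness of the non-degeneracy condition $\eta\wedge F^n\neq 0$ and on the fact that replacing $\eta$ by any $C^0$-close closed $1$-form does not disturb closedness of $F$ or the volume property. Once this is handled, the Reeb-flow argument produces the symplectomorphism $\varphi$ essentially for free.
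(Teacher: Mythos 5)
Your argument is correct and follows essentially the same route as the source the paper relies on: the paper does not reprove this theorem but cites Li, only sketching the easy direction (pulling back the volume form of $S^1$ and descending $\omega$ to $F$), and Li's proof of the hard direction is exactly your Tischler-perturbation-plus-Reeb-flow argument. One small inaccuracy: a closed $1$-form cannot in general be $C^0$-approximated by one with \emph{integral} periods (on $S^1$, a form with small irrational period cannot be); the standard Tischler step is to approximate $\eta$ by a closed form with \emph{rational} periods and then multiply by a positive integer, which preserves the nowhere-vanishing and volume-form conditions you need, so the rest of your argument goes through unchanged.
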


Notice that if $M$ is a symplectic mapping torus $M=N_{\varphi}$, then the pair $(F,\eta)$ defines
a co-symplectic structure on $M$, where $F$ is the closed $2$-form on $M$ defined by the symplectic
form on $N$, and
$$
\eta=\pi^*(\theta),
$$
with $\theta$ the volume form on $S^1$. Moreover,
notice that any $3$-dimensional mapping torus is  a symplectic mapping torus
if the corresponding diffeomorphism preserves the orientation, since
such a diffeomorphism is isotopic to an area preserving one.
However, in higher dimensions, there exist mapping tori without co-symplectic
structures. That is, they are not symplectic mapping tori (see Remark \ref{remark:4} in section \ref{sec:5} and \cite{Li}).

Next, we consider a Lie algebra $\fg$ of dimension $2n+1$ with
an \textit{almost contact metric} structure, that is, with a quadruplet
$(\phi,\xi,\eta,g)$ where $\phi$ is an endomorphism of $\fg$,
$\xi$ is a non-zero vector in $\fg$, $\eta\in\fg^*$ and $g$ is a scalar
product in $\fg$, satisfying
(\ref{eqn:almost-contact}). Then, $\fg$ is said to be \emph{co-symplectic}
iff $dF = d\eta=0$; and $\fg$ is called  \emph{co-K\"ahler} iff it is normal
and co-symplectic, where  $d:\wedge^k\fg^*\to\wedge^{k+1}\fg^*$ is the
Chevalley-Eilenberg differential.

The following result is proved in \cite{FV}.
\begin{proposition}
Co-K\"ahler Lie algebras in dimension $2n+1$ are in one-to-one correspondence
with $2n$-dimensional K\"{a}hler Lie algebras endowed with a skew-adjoint
derivation $D$ which commutes with its complex structure.
\end{proposition}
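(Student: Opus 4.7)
The plan is to construct an explicit bijection in both directions and then check that it preserves all the relevant structure. Given a co-K\"ahler Lie algebra $(\fg,\phi,\xi,\eta,g)$ of dimension $2n+1$, I would produce a Kähler Lie algebra $\fh$ together with a derivation $D$ from the subspace $\ker\eta$ and the adjoint action of $\xi$; conversely, I would reconstruct $\fg$ as a semidirect product $\fh\rtimes_D\bR\xi$.

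For the forward direction, set $\fh=\ker\eta$. Since $d\eta=0$ on $\fg$, the Chevalley--Eilenberg identity $\eta([X,Y])=-d\eta(X,Y)=0$ shows $\fh$ is a codimension-one ideal, so $\ad\xi$ preserves $\fh$ and $D:=\ad(\xi)|_\fh$ is a derivation of $\fh$. From $\phi^2=-\id+\eta\otimes\xi$ we get $J:=\phi|_\fh$ with $J^2=-\id$, and compatibility with $g$ restricts to a Hermitian metric $g|_\fh$ with fundamental form $F|_\fh$. Closedness of $F|_\fh$ in the Chevalley--Eilenberg complex of $\fh$ follows from $dF=0$ in $\fg$ via the pullback along the inclusion $\fh\hookrightarrow\fg$. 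Normality $N_\phi=0$ restricted to $\fh\times\fh$ gives $N_J=0$, so $(\fh,J,g|_\fh)$ is Kähler. That $D$ is skew-adjoint and $J$-linear follows from the Lie-algebra analogues of $\Lie_\xi g=0$ and $\Lie_\xi\phi=0$ (the Killing and $\phi$-preserving nature of $\xi$ in any co-Kähler structure); unpacking these as $g(DX,Y)+g(X,DY)=0$ and $D(JX)=J(DX)$.

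For the reverse direction, given a Kähler Lie algebra $(\fh,J,g_\fh)$ and a skew-adjoint $J$-linear derivation $D$, form $\fg=\fh\oplus\bR\xi$ with bracket extending that of $\fh$ via $[\xi,X]=D(X)$. The Jacobi identity is exactly the derivation property of $D$. Define $\eta$ by $\eta(\xi)=1$, $\eta|_\fh=0$; take $g=g_\fh\oplus\la\xi,\xi\ra$; and set $\phi(\xi)=0$, $\phi|_\fh=J$. The compatibility relations in~(\ref{eqn:almost-contact}) are immediate. Since $[\fg,\fg]\subset\fh$, we have $d\eta=0$. To see $dF=0$, note that $F(\xi,\cdot)=0$, so the only non-trivial Chevalley--Eilenberg term is $dF(\xi,Y,Z)$ with $Y,Z\in\fh$; after reducing with the bracket relations, this becomes a linear combination of $F_\fh(DY,Z)$ and $F_\fh(Y,DZ)$, which cancels precisely because $D$ is skew-adjoint and commutes with $J$. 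A short case analysis of $N_\phi$ (both arguments in $\fh$; one argument equal to $\xi$) shows $N_\phi=0$: the $\fh\times\fh$ case is the Kähler integrability of $J$, and the mixed case collapses to $-DY-J(D(JY))=0$ by the $J$-commutation of $D$.

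The main obstacle is the passage between the Riemannian/tensorial conditions defining co-K\"ahler structures and the purely linear-algebraic conditions on $D$: one must be confident that "Reeb preserves the co-K\"ahler data" translates exactly to "skew-adjoint and $J$-commuting derivation'', with no extra hidden constraints. Once that dictionary is established, the final task is the routine verification that the two constructions are mutually inverse: starting from $(\fg,\phi,\xi,\eta,g)$, the semidirect product built from $(\ker\eta,J,g|_\fh,D)$ is canonically isomorphic to $\fg$ via $X+t\xi\mapsto X+t\xi$, and starting from $(\fh,J,g_\fh,D)$, one recovers $\fh$ as the kernel of the tautological $\eta$ and $D$ as $\ad(\xi)|_\fh$.
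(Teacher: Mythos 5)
The paper does not actually prove this proposition: it is quoted from Fino--Vezzoni \cite{FV}, and the only argument of this type that the paper writes out is for the co-symplectic analogue (Proposition \ref{prop:107}). Your proof follows exactly that skeleton --- take $\fh=\ker\eta$, use $d\eta=0$ to see $[\fg,\fg]\subseteq\fh$ so that $\fh$ is an ideal and $D=\ad_\xi|_\fh$ is a derivation, and reconstruct $\fg=\bR\xi\oplus_D\fh$ in the other direction --- and then adds the extra bookkeeping that the co-K\"ahler case requires (integrability of $J=\phi|_\fh$ from $N_\phi=0$ on $\fh\times\fh$, and the conditions on $D$). That extra layer is where your write-up is thinnest: you assert that $D$ is skew-adjoint for $g$ and commutes with $J$ by appealing to ``$\xi$ is Killing and preserves $\phi$,'' which is itself a nontrivial consequence of the co-K\"ahler axioms and is not unpacked. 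The gap is easily closed, and in fact you already have the needed computations elsewhere in your argument: $dF=0$ evaluated on $(\xi,u,v)$ gives $\omega(Du,v)+\omega(u,Dv)=0$ (this is precisely the paper's computation (\ref{eq:156})), the mixed-argument Nijenhuis identity $N_\phi(\xi,Y)=-DY-J(D(JY))=0$ gives $DJ=JD$, and since $\omega(u,v)=g(Ju,v)$ any two of ``$D\in\mathfrak{sp}(\omega)$,'' ``$D$ is $g$-skew,'' and ``$[D,J]=0$'' imply the third. With that substitution the forward direction is complete, and the reverse direction and the verification that the constructions are mutually inverse are correct as written. So the proposal is sound; its value relative to the paper is that it supplies a self-contained proof of a statement the paper only cites, by the same method the paper uses for the co-symplectic case.
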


In order to extend this correspondence to co-symplectic Lie algebras we need
to recall the following. Let $(V,\omega)$ be a symplectic vector space (hence $\omega$ is a skew-symmetric invertible matrix).
An element $A\in\mathfrak{gl}(V)$ is an infinitesimal symplectic
transformation if $A\in\mathfrak{sp}(V)$, that is, if
$$
A^t\omega+\omega A=0.
$$
A scalar product $g$ on $(V,\omega)$ is said to be compatible
with $\omega$ if the endomorphism $J:V\to V$ defined by
$\omega(u,v)=g(u,J v)$ satisfies $J^2=-\id$.
We prove the following:

\begin{proposition}\label{prop:107}
Co-symplectic Lie algebras of dimension $2n+1$ are in one-to-one correspondence with $2n$-dimensional symplectic Lie
algebras endowed with a compatible metric and a derivation $D$ which is an infinitesimal symplectic transformation.
\end{proposition}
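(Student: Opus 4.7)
The plan is to mimic the co-K\"ahler correspondence, dropping the normality (equivalently the parallelism of $\phi$) and the integrability of $J$. In one direction I will peel off the Reeb line from a co-symplectic Lie algebra $(\fg,\phi,\xi,\eta,g)$ to obtain a codimension-one ideal with a symplectic form, a compatible metric and a distinguished derivation. In the other direction I will reconstruct $\fg$ as a semi-direct product.

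For the forward direction, let $\fh=\ker\eta\subset\fg$. The condition $d\eta=0$, read as a Chevalley-Eilenberg 1-cochain, means $\eta([X,Y])=0$ for all $X,Y\in\fg$; together with $\eta(\xi)=1$ this says that $\fh$ is an ideal and $\fg=\fh\oplus\bR\xi$ as a vector space. A preliminary fact that I will establish (it holds for every almost contact metric structure) is that $\phi\xi=0$ and $\eta\circ\phi=0$, so that $F(\,\cdot\,,\xi)=F(\xi,\,\cdot\,)=0$. From this and $\eta\wedge F^n\neq 0$ it follows that $\omega:=F|_\fh$ is non-degenerate; being the restriction of a closed Chevalley-Eilenberg 2-cochain, it is also closed, so $(\fh,\omega)$ is a symplectic Lie algebra. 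The restriction $g|_\fh$ is compatible with $\omega$: the endomorphism $J$ on $\fh$ determined by $\omega(u,v)=g(u,Jv)$ coincides with $\phi|_\fh$, and $\phi^2=-\id+\eta\otimes\xi$ restricts to $J^2=-\id$ on $\fh$. Finally, because $\fh$ is an ideal, $D:=\ad(\xi)|_\fh$ is a derivation of $\fh$. To see that $D$ is infinitesimally symplectic, expand $dF(X,Y,\xi)=0$ for $X,Y\in\fh$ using the Chevalley-Eilenberg formula
\[
dF(X,Y,\xi)=-F([X,Y],\xi)+F([X,\xi],Y)-F([Y,\xi],X).
\]
The first term vanishes by $F(\,\cdot\,,\xi)=0$; substituting $[X,\xi]=-DX$ and using the antisymmetry of $F$ yields $\omega(DX,Y)+\omega(X,DY)=0$, i.e.\ $D^{t}\omega+\omega D=0$.

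For the converse, start from $(\fh,\omega,g,D)$ with the stated properties. Put $\fg=\fh\oplus\bR\xi$ and declare $[\xi,X]=DX$, extending $[\,,\,]_\fh$; the Jacobi identity on triples containing $\xi$ is exactly the Leibniz rule for $D$. Let $\eta\in\fg^*$ be the dual of $\xi$; since $\eta$ vanishes on every bracket, $d\eta=0$. Extend $\omega$ to $F\in\wedge^2\fg^*$ by $F(\,\cdot\,,\xi)=0$, extend $g$ by declaring $\xi$ orthonormal and orthogonal to $\fh$, and extend $J$ to $\phi$ by $\phi\xi=0$. A short check verifies (\ref{eqn:almost-contact}) and $F(X,Y)=g(\phi X,Y)$, so $(\phi,\xi,\eta,g)$ is an almost contact metric structure with fundamental 2-form $F$. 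Closedness of $F$ on $\fg$ reduces to two cases: on $\fh^{\otimes 3}$ it is $d\omega=0$, and on $\fh\otimes\fh\otimes\bR\xi$ the same computation as above, now read backwards, shows it is equivalent to $D^{t}\omega+\omega D=0$, which is our hypothesis. Hence $(\fg,\phi,\xi,\eta,g)$ is co-symplectic.

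The two constructions are visibly mutually inverse: if one starts from $(\fg,\phi,\xi,\eta,g)$, then $\fh\rtimes_{D}\bR\xi$ with the above extensions reproduces the original Lie bracket and the original tensors (the decomposition $\fg=\ker\eta\oplus\bR\xi$ and $\ad\xi|_{\ker\eta}$ are canonical); conversely, extracting $\ker\eta$ from the semi-direct product returns the initial data on $\fh$. The argument is almost entirely bookkeeping; the only step requiring a small observation is the initial one that $\phi\xi=0$ and $F(\,\cdot\,,\xi)=0$, which is what decouples the $\xi$-direction from $\fh$ and turns $dF=0$ into the infinitesimal symplectic condition. That is the main (mild) obstacle; everything else is a direct translation.
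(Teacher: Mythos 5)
Your proof is correct and follows essentially the same route as the paper's: identify $\fh=\ker\eta$ as a codimension-one ideal carrying the symplectic form and compatible metric, read the infinitesimal symplectic condition for $D=\ad_\xi|_{\fh}$ off from $dF(\cdot,\cdot,\xi)=0$, and reconstruct $\fg$ as $\bR\xi\oplus_D\fh$ in the converse. You are somewhat more explicit than the paper about the standard almost contact identities $\phi\xi=0$, $F(\cdot,\xi)=0$ and about the two constructions being mutually inverse; the only quibble is that with the convention $\omega(u,v)=g(u,Jv)$ the compatible endomorphism is $J=-\phi|_{\fh}$ rather than $\phi|_{\fh}$, which changes nothing since $J^2=-\id$ either way.
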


\begin{proof}
Let $(\phi,\xi,\eta,g)$ be a co-symplectic structure on a Lie algebra $\fg$ of dimension $2n+1$.
Set $\fh=\ker(\eta)$. For $u,v\in\fh$ we compute
$$
\eta([u,v])=-d\eta(u,v)=0,
$$
since $\eta$ is closed (this is simply Cartan's formula applied to the case in which $\eta(u)$ and $\eta(v)$ are constant). Then $\fh$ is a Lie subalgebra of $\fg$.
Note that $\fh$ inherits an almost complex structure $J$ and a metric $g$ which are compatible.
From $\phi$ and $g$ we obtain the $2$-form $\omega$ which is closed and non-degenerate by hypothesis.
Thus $(\fh,\omega)$ is a symplectic Lie algebra.

Actually $\fh$ is an ideal of $\fg$. Indeed, the fact that $\eta(\xi)= 1$ implies
that $\xi$ does not belong to $[\fg,\fg]$, and then one has
$$
[\fh,\fh]\subseteq \fh \quad \textrm{and} \quad [\xi,\fh]\subseteq\fh.
$$
Thus one can write
$$
\fg=\mathbb{R}\xi\oplus_{\ad_{\xi}}\fh.
$$
Since $\omega$ is closed, we obtain
\begin{align}\label{eq:156}
0&=d\omega(\xi,u,v)=-\omega([\xi,u],v)+\omega([u,v],\xi)-\omega([v,\xi],u)= \nonumber \\
&=-\omega(\ad_{\xi}(u),v)-\omega(u,\ad_{\xi}(v)).
\end{align}
The correspondence $X\mapsto \ad_{\xi}(X)$ gives a derivation $D$ of $\fh$ (this follows from the Jacobi identity in $\fg$) and the above equality shows that $D$ is an infinitesimal symplectic transformation.

Next suppose we are given a symplectic Lie algebra $(\fh,\omega)$ endowed with a metric $g$ and a derivation $D\in\mathfrak{sp}(\fh)$. Set
$$
\fg=\mathbb{R}\xi\oplus\fh
$$
and define the following Lie algebra structure on $\fg$:
$$
[u,v]:=[u,v]_{\fh}, \quad [\xi,u]:=D(u), \quad u,v\in\fh.
$$
Since $D$ is a derivation of $\fh$, the Jacobi identity holds in $\fg$. Let $J$ denote the almost complex structure compatible with $\omega$ and $g$. Extend $J$ to an endomorphism $\phi$ of $\fg$ setting $\phi(\xi)=0$ and extend $g$ so that $\xi$ has length 1 and $\xi$ is orthogonal to $\fh$. Also, let $\eta$ be the dual $1$-form with respect to the metric $g$. It is immediate to see that $d\eta=0$. On the other hand, equation (\ref{eq:156}) shows that $d\omega=0$ as $D$ is an infinitesimal symplectic transformation. Thus $\fg$ is a co-symplectic Lie algebra.
\end{proof}

\begin{remark}
If one wants to obtain a co-symplectic \textit{nilpotent} Lie algebra, then the initial data in
Proposition \ref{prop:107} must be modified so that the symplectic Lie algebra and the derivation $D$ are nilpotent.
This gives a way to classify co-symplectic nilpotent Lie algebras in dimension $2n+1$ starting from nilpotent symplectic Lie algebras in dimension $2n$ and a nilpotent symplectic derivation.
\end{remark}

%
%

\section{Minimal models of mapping tori} \label{non-formality-mapping-torus}

In this section we study the formality of the mapping torus
of a orientation-preserving diffeomorphism of a manifold.
We start with some useful results.

\begin{lemma}\label{lem:18'}
Let $N$ be a smooth manifold and let $\varphi:N\to N$ be a diffeomorphism.
Let $M=N_{\varphi}$ denote the mapping torus of $\varphi$.
Then the cohomology of $M$ sits in an exact sequence
 $$
 0 \to C^{p-1} \to H^p(M) \to K^{p}\to 0,
 $$
where $K^p$ is the kernel of $\varphi^*-\id\colon H^{p}(N)\to  H^{p}(N)$, and $C^{p}$ is its cokernel.
\end{lemma}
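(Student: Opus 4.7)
The plan is to exploit the fibre-bundle structure $\pi\colon N_{\varphi}\to S^1$, whose fibre is $N$ and whose monodromy around the circle is $\varphi$, and to extract the claimed short exact sequence from a Mayer--Vietoris argument. Concretely, I would take the standard open cover $S^1=U_1\cup U_2$ with each $U_i$ an open arc and $U_1\cap U_2$ the disjoint union of two open arcs, and set $V_i=\pi^{-1}(U_i)$. Each $V_i$ deformation retracts onto a fibre $N$, while $V_1\cap V_2$ is homotopy equivalent to $N\sqcup N$. By choosing the trivialisations compatibly with the gluing $(x,0)\sim(\varphi(x),1)$, the inclusion $V_1\cap V_2\hookrightarrow V_i$ induces the identity on cohomology on one of the two components and $\varphi^*$ on the other, thereby encoding the monodromy.

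Under these identifications, the Mayer--Vietoris connecting homomorphism
$$\delta^k\colon H^k(V_1)\oplus H^k(V_2)\longrightarrow H^k(V_1\cap V_2)$$
becomes, up to signs, the map $H^k(N)\oplus H^k(N)\to H^k(N)\oplus H^k(N)$ given by $(a,b)\mapsto (a-b,\ \varphi^*a - b)$. A short linear-algebra computation then shows that $\ker\delta^k\cong K^k$ and $\coker\delta^k\cong C^k$. Indeed, $\delta^k(a,b)=0$ forces $a=b$ and $(\varphi^*-\id)a=0$, so the diagonal embedding identifies $\ker\delta^k$ with $K^k$; for the cokernel, after the change of variable $c=a-b$ one sees that $\operatorname{image}\delta^k$ consists of all pairs $(c,c+v)$ with $v\in\operatorname{image}(\id-\varphi^*)$, so quotienting yields $\coker\delta^k\cong C^k$.

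Feeding these two computations into the Mayer--Vietoris long exact sequence in degree $p$ produces
$$0\longrightarrow\coker\delta^{p-1}\longrightarrow H^p(M)\longrightarrow\ker\delta^p\longrightarrow 0,$$
which is precisely the desired sequence $0\to C^{p-1}\to H^p(M)\to K^p\to 0$. There is no serious obstacle; the only delicate point is to identify the Mayer--Vietoris coboundary correctly with $\varphi^*-\id$, which amounts to careful bookkeeping for the two components of $V_1\cap V_2$, glued respectively by the identity and by $\varphi$. An equivalent and slightly slicker route, which I would mention but probably not use, is the Wang exact sequence of the fibration $N\to N_{\varphi}\to S^1$, or equivalently the Leray--Serre spectral sequence, whose $E_2$-page has only the two non-zero columns $E_2^{0,q}=\ker(\varphi^*-\id)$ and $E_2^{1,q}=\coker(\varphi^*-\id)$ and collapses at $E_2$ for dimensional reasons, producing the same two-step filtration on $H^p(M)$.
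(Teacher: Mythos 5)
Your proposal is correct and follows essentially the same route as the paper: a Mayer--Vietoris argument for the cover of $S^1$ by two arcs, identifying the map $H^p(N)\oplus H^p(N)\to H^p(N)\oplus H^p(N)$ (which up to a harmless convention on which component carries the monodromy is the paper's $([\alpha],[\beta])\mapsto([\alpha]-[\beta],[\alpha]-\varphi^*[\beta])$) and computing its kernel and cokernel to be $K^p$ and $C^p$. The linear-algebra identifications you give are exactly what is needed, so nothing is missing.
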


\begin{proof}
 This is a simple application of the Mayer-Vietoris sequence. Take $U,V$ two
 open intervals
 covering $S^1=[0,1]/0 \sim 1$, where $U\cap V$ is the disjoint union of two intervals.
 Let $U'=\pi^{-1}(U)$, $V'=\pi^{-1}(V)$. Then $H^p(U')\cong H^p(N)$,
 $H^p(V') \cong H^p(N)$ and $H^p(U'\cap V')\cong H^p(N)\oplus H^p(N)$. The
Mayer-Vietoris sequence associated to this covering becomes
 \begin{align}\label{eqn:MV}
  \ldots &\to H^p(M) \to H^p(N) \oplus H^p(N)  \stackrel{F}{\longrightarrow} H^p(N) \oplus H^p(N)
 \to H^{p+1}(M) \to  \nonumber\\
  &\to H^{p+1}(N)\oplus H^{p+1}(N)  \to \ldots
 \end{align}
 where the map $F$ is $([\alpha],[\beta]) \mapsto ([\alpha]-[\beta],[\alpha]-\varphi^*[\beta])$.

 Write
 $$
 K=\ker \Big(\varphi^*-\id: H^*(N)\to H^*(N)\Big), \quad \text{and}  \quad
 C=\coker  \Big(\varphi^*-\id: H^*(N)\to H^*(N) \Big).
 $$
 These are graded vector spaces $K=\bigoplus K^p$, $C=\bigoplus C^p$.
 The exact sequence (\ref{eqn:MV}) then yields an exact sequence
 $0 \to C^{p-1} \to H^p(M) \to K^{p}\to 0$.
\end{proof}

Let us look more closely at the exact sequence in Lemma \ref{lem:18'}.
First take $[\beta]\in C^{p-1}$. Then $[\beta]$ can be thought as
an element in $H^{p-1}(N)$
modulo $\im (\varphi^*-\id)$. The map $C^{p-1} \to H^p(M)$ in Lemma
\ref{lem:18'} is the connecting homomorphism $\delta^*$. This is worked
out as follows (see \cite{BottTu}):
take a smooth function $\rho(t)$ on $U$ which equals $1$ in one
of the intervals of $U\cap V$ and zero on the other. Then
  \begin{equation}\label{eqn:referee}
 \delta^* [\beta]= [d\rho\wedge \beta].
  \end{equation}
Write $\tilde\beta=d\rho\wedge \beta$. If we put the point $t=0$ in $U\cap V$, then clearly
$\tilde\beta(x,0)= \tilde\beta(x,1)=0$, so $\tilde\beta$ is a well-defined closed $p$-form
on $M$. (Note that $[d\rho]=[\eta] \in H^1(S^1)$, where
$\eta=\pi^*(\theta)=dt$, so $[\tilde\beta]\in H^p(M)$ is
$[\eta \wedge \beta]$.)

On the other hand, if $[\alpha]\in K^p$, then $\varphi^*[\alpha]=[\alpha]$.
So $\varphi^*\alpha=\alpha+d\theta$, for some $(p-1)$-form $\theta$. Let us take
a function $\rho:[0,1]\to[0,1]$
such that $\rho\equiv 0$ near $t=0$ and $\rho\equiv 1$ near $t=1$.
Then, the closed $p$-form $\tilde\alpha$ on $N\times[0,1]$ given by
 \begin{equation} \label{eqn:tilde-alpha}
    \tilde{\alpha}(x,t)=\alpha(x)+d(\rho(t)\theta(x)),
 \end{equation}
where $x\in N$ and $t\in [0,1]$,
defines a closed $p$-form $\tilde\alpha$ on $M$. Indeed,
$\varphi^*\tilde{\alpha}(x,0)=\varphi^*\alpha=\alpha+d\theta=\tilde{\alpha}(x,1)$.
Moreover, the class
$[\tilde\alpha]\in H^p(M)$ restricts to $[\alpha]\in H^p(N)$.
This gives a splitting
  $$
  H^p(M) \cong C^{p-1} \oplus K^{p}.
 $$

\begin{theorem}\label{thm:19}
Let $N$ be an oriented compact
smooth manifold of dimension $n$, and let
$\varphi:N\to N$ be an orientation-preserving diffeomorphism.
Let $M=N_{\varphi}$ be the mapping torus of $\varphi$. Suppose that,
for some $p>0$, the homomorphism $\varphi^*:H^p(N)\to H^p(N)$ has eigenvalue $\lambda=1$ with multiplicity\footnote{In this paper, by \textit{multiplicity} of the eigenvalue $\lambda$ of an endomorphism $A:V\to V$ we mean the multiplicity of $\lambda$ as a root of the minimal polynomial of $A$.} two.
Then $M$ is non-formal since there exists a non-zero (triple) Massey
product. More precisely,
if $[\alpha]  \in K^p \subset H^p(N)$ is such that
 $$
 [\alpha] \in \im \Big(\varphi^*-\id: H^p(N)\to H^p(N)\Big),
 $$
then the Massey product $\la  [\eta], [\eta],[\tilde\alpha]\ra$ does not vanish.
\end{theorem}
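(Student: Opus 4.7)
The plan has three parts: verifying that the triple Massey product is defined, exhibiting an explicit representative, and proving non-triviality modulo the indeterminacy.

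For definedness: $[\eta]\cdot[\eta]=0$ is automatic since $\eta$ has degree $1$, and $[\eta]\cdot[\tilde\alpha]=0$ follows from Lemma \ref{lem:18'}. Indeed, $\eta$ is pulled back from $S^1$, so it restricts to zero on every fiber $N\subset M$; hence $[\eta\wedge\tilde\alpha]$ lies in the kernel of the restriction map $H^{p+1}(M)\to H^{p+1}(N)$, which by Lemma \ref{lem:18'} equals the image of $\delta^*\colon C^p\to H^{p+1}(M)$. Writing $\eta-d\rho=df$ for $f(x,t)=t-\rho(t)$, a well-defined function on $M$, so that $[\eta]=[d\rho]$ in $H^1(M)$, and using the formula $\delta^*[\lambda]=[d\rho\wedge\lambda]$, a direct computation identifies $[\eta\wedge\tilde\alpha]$ with $\delta^*$ applied to $[\alpha]\bmod\im(\varphi^*-\id)$, which is zero by hypothesis.

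To produce an explicit representative, I would pass to the universal cover $\tilde M=N\times\mathbb R$, where $\eta=du$ is exact and $u\tilde\alpha$ is a primitive of $\eta\wedge\tilde\alpha$. The obstruction to $T$-invariance under the deck transformation $T(x,u)=(\varphi(x),u+1)$ is $T^*(u\tilde\alpha)-u\tilde\alpha=\tilde\alpha$. The hypothesis provides $[\beta]\in H^p(N)$ with $(\varphi^*-\id)[\beta]=[\alpha]$; lifting a cocycle representative $\beta$ to $\tilde M$ and adjusting by a suitable exact form yields a closed $p$-form $\nu$ on $\tilde M$ with $T^*\nu-\nu=\tilde\alpha$ on the nose. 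Then $\mu:=u\tilde\alpha-\nu$ is $T$-invariant, descends to $M$, and satisfies $d\mu=\eta\wedge\tilde\alpha$. Taking $s=0$ and $t=-\mu$ in the Massey-product formula gives the representative $w=\eta\wedge\mu$.

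To show $[w]\ne 0$ in $H^{p+1}(M)/\mathcal I$ with $\mathcal I=[\eta]\cdot H^p(M)+H^1(M)\cdot[\tilde\alpha]$, tracking the cover construction identifies $[w]=-\delta^*[\beta]\in\delta^*C^p$ via the relation $\mu|_N=-\beta$ and the same Mayer--Vietoris argument as in the first paragraph. The projection of the indeterminacy onto $\delta^*C^p$ is exactly $\delta^*(K^p\bmod\im(\varphi^*-\id))$: for any $[\tilde\kappa]\in H^p(M)$ with fiber restriction $[\kappa]\in K^p$ one has $[\eta]\cdot[\tilde\kappa]=\delta^*[\kappa]$, while $[\eta]\cdot\delta^*C^{p-1}=0$ since $\eta\wedge d\rho=0$, and an analogous analysis shows $H^1(M)\cdot[\tilde\alpha]$ contributes nothing further to $\delta^*C^p$ beyond $\delta^*K^p$. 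Non-triviality thus reduces to $[\beta]\notin K^p+\im(\varphi^*-\id)$ in $H^p(N)$. The multiplicity-two assumption forces the largest Jordan block for $\varphi^*$ at $\lambda=1$ to have size exactly $2$; choosing $[\beta]$ to be at the top of such a block gives $[\beta]\notin K^p$ (since $(\varphi^*-\id)[\beta]=[\alpha]\ne 0$) and $[\beta]\notin\im(\varphi^*-\id)$ (as this would require a size-$\geq 3$ block), completing the proof. The main technical obstacle I expect is the precise identification $[w]=-\delta^*[\beta]$ on $M$, reconciling the cover-based construction with the Mayer--Vietoris formula, together with the verification that the $H^1(M)\cdot[\tilde\alpha]$ indeterminacy does not enlarge $\delta^*K^p$ inside $\delta^*C^p$.
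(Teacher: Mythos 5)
Your first two steps essentially reproduce the paper's argument: well-definedness via Lemma \ref{lem:18'} and formula (\ref{eqn:referee}), and a primitive of $\eta\wedge\tilde\alpha$ whose fibre restriction is $t[\alpha]+[\beta]$ (your $\mu=u\tilde\alpha-\nu$ on the cover is the paper's $\tilde\gamma=\int_0^t\tilde\alpha\,ds+\beta+d(\cdots)$). Where you genuinely diverge is the non-vanishing step. The paper never tries to locate $[dt\wedge\tilde\gamma]$ inside the splitting $H^{p+1}(M)\cong C^p\oplus K^{p+1}$; instead it uses Poincar\'e duality on the compact oriented $N$, together with $\la\varphi^*u,\varphi^*v\ra=\la u,v\ra$ (this is where orientation-preservation enters), to produce $[\xi]\in I^{n-p}$ with $\la[\beta],[\xi]\ra\neq0$ and $\la K^p,[\xi]\ra=0$, and then pairs both the Massey product and the entire indeterminacy $\mathcal I$ against $[\tilde\xi]$. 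Your replacement of this, for the ``main term'', by the Jordan-block computation showing $[\beta]\notin K^p+\im(\varphi^*-\id)$ under the multiplicity-two hypothesis is correct, as is the identification $[\eta]\cdot H^p(M)=\delta^*(\bar K^p)$, where $\bar K^p$ denotes the image of $K^p$ in $C^p$.

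The genuine gap is the other half of the indeterminacy, $H^1(M)\cdot[\tilde\alpha]$, which you dismiss with ``an analogous analysis shows [it] contributes nothing further to $\delta^*C^p$ beyond $\delta^*K^p$.'' No such analogous analysis is available. For $[\tau]\in K^1$ the class $[\tilde\tau]\wedge[\tilde\alpha]$ restricts on the fibre to $[\tau\wedge\alpha]$; when this vanishes in $H^{p+1}(N)$ the product does lie in $\delta^*C^p$, but the element of $C^p$ it represents is computed from a primitive $\chi$ of $\tau\wedge\alpha$ on $N$ and its failure to be $\varphi$-invariant (schematically $[\varphi^*\chi-\chi+\cdots]\bmod\im(\varphi^*-\id)$), and nothing forces this class into $\bar K^p$. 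So your reduction of non-triviality to $[\beta]\notin K^p+\im(\varphi^*-\id)$ is unjustified whenever $K^1\neq0$ --- which is exactly the situation in Proposition \ref{m=3}, where the theorem is applied with $p=1$ and $\dim K^1=2k-1$. A telling symptom is that your argument nowhere uses that $N$ is compact and oriented and that $\varphi$ preserves orientation, whereas the paper invokes these hypotheses precisely to kill this part of $\mathcal I$ (every element of $\mathcal I$ pairs to zero with $[\tilde\xi]$). To close the gap you would either have to carry out the Wang-sequence computation of $[\tilde\tau\wedge\tilde\alpha]$ in $C^p$ and show it lands in $\bar K^p$, or import the paper's duality pairing; as written, the proof is incomplete at its decisive point.
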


\begin{proof}
First, we notice that if the eigenvalue $\lambda=1$ of
 $ \varphi^*:H^p(N)\to H^p(N)$
has multiplicity two, then there exists $[\alpha]\in H^p(N)$ satisfying the conditions
mentioned in Theorem \ref{thm:19}. In fact, denote by
$$
 E=\ker \, (\varphi^{*}-\id)^2
 $$
 the graded eigenspace corresponding to $\lambda=1$. Then
$K=\ker(\varphi^{*}-\id) \subset E$ is a proper subspace. Take
 \begin{equation} \label{eqn:pairing}
[\beta]\in E^p \setminus K^p \subset H^p(N) \quad  \text{and}  \quad  [\alpha] = \varphi^*[\beta]-[\beta].
 \end{equation}

Thus $[\alpha]\in K^p \cap \im  \Big(\varphi^*-\id: H^p(N)\to H^p(N) \Big)$. By (\ref{eqn:referee})
and Lemma \ref{lem:18'}, 
the Massey product $\la  [\eta], [\eta],[\tilde\alpha]\ra$ is well-defined.
In order to prove that it is non-zero we proceed as follows. Clearly,
 $$
 C\cong E/I, \quad  \text{where } \ I =\im (\varphi^{*} -\id) \cap E.
 $$

As $\varphi$ is an orientation-preserving diffeomorphism, the Poincar\'e duality pairing
satisfies that $\la \varphi^*(u),\varphi^*(v)\ra=
\la u,v\ra$, for $u\in H^p(N)$, $v\in H^{n-p}(N)$.
Therefore the $\lambda$-eigenspace
of $\varphi^*$, $E_\lambda$, pairs
non-trivially only with $E_{1/\lambda}$. In particular, Poincar\'e duality gives
a perfect pairing
 $$
 E^p \times E^{n-p} \to \bR.
 $$
Now $K^p \times I^{n-p}$ is sent to
zero: if $x\in \ker(\varphi^*-\id)$ and $y=\varphi^*(z)-z$, then $\langle
x,y\rangle= \langle x,\varphi^*(z)-z\rangle=
\langle x,\varphi^*(z) \rangle - \langle x,z \rangle=
\langle \varphi^*(x),\varphi^*(z) \rangle - \langle x,z \rangle=0$.
Therefore there is a perfect pairing
$$
E^p/K^p \times I^{n-p} \to \bR.
$$

Take  $[\beta]$ and $[\alpha]$ as in (\ref{eqn:pairing}).
By the discussion above about Poincar\'e duality, there is some
$[\xi]\in I^{n-p}$ such that
 $$
 \la [\beta],[\xi]\ra \neq 0.
 $$
Note that in particular, $[\xi]$ pairs trivially with all elements in $K^p$.

Consider now the form $\tilde\alpha$ on $M$ corresponding to $\alpha$ as in (\ref{eqn:tilde-alpha}),
$[\tilde\alpha]\in H^p(M)$.
Let us take the $p$-form $\gamma$ on $N$ defined by
$$
\gamma= \int_{0}^{1} \tilde{\alpha}(x,s)ds.
$$
Then $[\gamma]=[\alpha]=\varphi^*[\beta]-[\beta]$ on $N$. Hence we can write
$$
\gamma=\varphi^*\beta -\beta + d\sigma,
$$
for some $(p-1)$-form $\sigma$ on $N$. Now let us set
$$
\tilde{\gamma}(x,t)= \left(\int_{0}^{t} \tilde{\alpha}(x,s)ds\right) + \beta + d(\zeta(t) (\varphi^*)^{-1}\sigma),
$$
where $\zeta(t)$, $t\in [0,1]$, equals $1$ near $t=0$, and equals $0$ near $t=1$. Then
$$
\varphi^* (\tilde{\gamma}(x,0)) =
\varphi^*(\beta + d((\varphi^*)^{-1}\sigma))=
\varphi^*\beta + d\sigma=\gamma+\beta=\tilde{\gamma}(x,1),
$$
so $\tilde{\gamma}$ is a well-defined $p$-form on $M$. Moreover,
 $$
 d(\tilde{\gamma}(x,t)) = dt \wedge \tilde{\alpha}(x,t)
 $$
on the mapping torus $M$.
Therefore we have the Massey product
 \begin{equation} \label{eqn:MProduct}
 \la [dt],[dt], [\tilde\alpha] \ra= [dt \wedge \tilde{\gamma}].
 \end{equation}

We need to see that this Massey product is non-zero.
For this, we multiply against $[\tilde\xi]$, where $\tilde{\xi}$ is the $(n-p)$-form
on $M$ associated to $\xi$ by the formula (\ref{eqn:tilde-alpha}). 
Recall that $[\xi]\in I^{n-p}\subset K^{n-p}
\subset H^{n-p}(M)$. We have
 $$
 \langle [dt \wedge \tilde{\gamma}], [\tilde\xi]\rangle =\int_M dt \wedge \tilde{\gamma}\wedge \tilde\xi=
 \int_0^1 \left( \int_{N\x \{t\}} \tilde{\gamma} \wedge \tilde\xi \right) dt\, .
 $$
Restricting to the fibers, we have $[\tilde{\gamma}|_{N\x\{t\}}] =t[\alpha]+[\beta]$
and $[\tilde\xi|_{N\x\{t\}}] =[\xi]$. Moreover,
$\la [\alpha],[\xi]\ra=0$ and
$\la [\beta],[\xi]\ra =\kappa \neq 0$.
So $\int_{N\x \{t\}} \tilde{\gamma} \wedge \tilde\xi= \kappa \neq 0$. Therefore
 $$
 \langle [dt \wedge \tilde{\gamma}], [\tilde\xi]\rangle =\kappa\neq 0\, .
 $$

Now the indeterminacy of the Massey product is in the space
 $$
 {\mathcal I}= [\tilde\alpha] \wedge H^{1}(M) + [\eta] \wedge H^{p}(M).
 $$
To see that the Massey product (\ref{eqn:MProduct}) does not live in ${\mathcal I}$, it is
enough to see that the elements in ${\mathcal I}$ pair trivially with $[\tilde{\xi}]$.
On the one hand, $\tilde\alpha\wedge\tilde{\xi}$ is exact in every fiber (since
$\langle [\alpha],[\xi]\rangle=0$ on $N$). Therefore $[\tilde\alpha]\wedge[\tilde{\xi}]=0$.
On the other hand, $H^p(M)\cong C^{p-1} \oplus K^p$. The elements corresponding to
$C^{p-1}$ all have a $dt$-factor. Hence the elements in $[\eta] \wedge H^{p}(M)$
are of the form $[dt\wedge \tilde\delta]$, for some $[\delta]\in K^p\subset H^p(N)$.
But then $\langle [dt\wedge \tilde\delta],[\tilde\xi]\rangle=
\int_M dt\wedge \tilde\delta\wedge\tilde\xi= \langle [\delta],[\xi]\rangle=0$.
\end{proof}

\begin{remark} \label{rem:otro}
The non-formality of the mapping torus $M$
is proved in \cite[Proposition 9]{FGM}
when $p=1$ and the eigenvalue $\lambda=1$ has multiplicity
$r\geq 2$, by a different method.
 \end{remark}

We finish this section with the following result,  which gives a
partial computation of the minimal model of $M$.

From now on we write
 $$
 \varphi^*_k : H^k(N) \to H^k(N),
 $$
for each $1\leq k\leq n$,
the induced morphism on cohomology by a diffeomorphism
$\varphi: N\to N$.

\begin{theorem} \label{thm:minimal-model-mapping-torus}
With $M=N_\varphi$ as above, suppose that there is some $p\geq 2$ such
that $\varphi^*_k$ does not have the eigenvalue
$\lambda=1$ (i.e. $\varphi^*_k -\id$
is invertible) for any $k \leq (p-1)$, and that $\varphi^*_p$ does have 
the eigenvalue $\lambda=1$ with some multiplicity 
$r\geq1$. Denote
  $$
 K_j= \ker \Big((\varphi_{p}^*- \id)^j: H^p(N) \to H^p(N)\Big),
  $$
for $j=0,\ldots, r$. So $\{0\}=K_0\subset K_1 \subset K_2 \subset \ldots \subset K_r$.
Write $G_j=K_j/K_{j-1}$,
$j=1,\ldots, r$. The map $F=\varphi_{p}^*-\id$ induces maps
$F: G_j \to G_{j-1}$, $j=1,\ldots, r$ (here $G_0=0$).

 Then the minimal model of $M$ is, up to degree $p$, given by the following generators:
  \begin{align*}
  W^1 &= \la a \ra, \qquad da=0, \\
  W^k &= 0, \qquad k = 2, \ldots, p-1, \\
  W^p &= G_1 \oplus G_2 \oplus \ldots \oplus G_r, \qquad dw= a \cdot F(w), \, w\in G_j.
  \end{align*}
 \end{theorem}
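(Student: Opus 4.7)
The plan is to build a Sullivan minimal model of $M$ degree by degree, maintaining a CDGA morphism $\rho:\bigwedge V^{\leq k}\to\Omega^*(M)$ which at each stage induces an isomorphism on cohomology in degrees $\leq k$ and an injection on $H^{k+1}$. Once carried out through degree $p$, the generators must be exactly $W^1\oplus\cdots\oplus W^p$ as stated. The first step is to read off the cohomology of $M$ in low degrees from Lemma \ref{lem:18'}: because $\varphi_k^*-\id$ is invertible for $1\leq k\leq p-1$, one obtains $H^1(M)\cong\bR=\la [dt]\ra$, $H^k(M)=0$ for $2\leq k\leq p-1$, and $H^p(M)\cong K_1$.

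In degree $1$ I would therefore add a single closed generator $a$ with $\rho(a)=dt$, and in each degree $k=2,\ldots,p-1$ no generator is required, since $H^k(M)=0$ and $\bigwedge\la a\ra$ contributes nothing in those degrees because $a^2=0$. In degree $p$ the construction has two stages. First, to realize $H^p(M)=K_1$, I would add closed generators $G_1=K_1$, each $w\in K_1$ being sent by $\rho$ to the closed $p$-form $\tilde w$ produced by formula (\ref{eqn:tilde-alpha}); this secures the isomorphism on $H^{\leq p}$. Second, I would add $G_2,\ldots,G_r$ inductively with differential $dw=a\cdot F(w)$, where $F:G_j\hookrightarrow G_{j-1}$ is the injection induced by $\varphi_p^*-\id$. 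Analytically, for each $w\in G_j$ with $j\geq 2$ I would produce a non-closed $p$-form $\rho(w)$ on $M$ satisfying $d\rho(w)=dt\wedge\rho(F(w))$ by an explicit integration-in-$t$ formula directly generalising the construction of the form $\tilde\gamma$ in the proof of Theorem \ref{thm:19}.

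The main obstacle is checking that these $G_j$ are exactly what is needed, neither more nor less. Since $a^2=0$ and $p\geq 2$, the cocycles of $\bigwedge V^{\leq p}$ in degree $p+1$ are exactly $a\cdot W^p$, and the coboundaries are $a\cdot\bigoplus_{j=2}^r F(G_j)$. Under $\rho^*$ the class $[a\cdot w]$ for $w\in G_j$ maps to $\delta^*([\beta_w])\in C^p\subset H^{p+1}(M)$, where $\beta_w\in K_j$ is the representative of $w$ used to define $\rho(w)$. The heart of the argument is the linear-algebra identity $K_j\cap\im F=F(K_{j+1})$, which combined with the injectivity of $F:G_{j+1}\hookrightarrow G_j$ shows that the kernel of the evaluation map $\bigoplus_j G_j\to C^p$ coincides precisely with $\bigoplus_{j=1}^{r-1} F(G_{j+1})$, hence with the space of degree-$(p+1)$ coboundaries in $\bigwedge V^{\leq p}$. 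Thus $\rho^*$ is injective on $H^{p+1}(\bigwedge V^{\leq p})$, and the process terminates at $G_r$ because $K_{r+1}=K_r$ forces $G_{r+1}=0$, so no further kernel elements remain to be killed.
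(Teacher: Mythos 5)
Your proposal is correct and follows essentially the same route as the paper's proof: the same morphism $\rho$ with $\rho(a)=dt$, the same realization of the degree-$p$ generators by forms built from (\ref{eqn:tilde-alpha}) and fiberwise integration in $t$, and the same verification that $\rho^*$ injects $H^{p+1}$ of the model into the subspace $[dt]\wedge \coker(\varphi_p^*-\id)$ of $H^{p+1}(M)$. The only difference is presentational: the paper carries out the construction explicitly Jordan block by Jordan block (including the correction constants $c_i(t)$ needed so that $\int_0^1 \tilde\alpha_j\,dt$ represents the intended class), whereas you encode the same linear algebra invariantly through the identity $K_j\cap\im F=F(K_{j+1})$.
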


 \begin{proof}
 We need to construct a map of differential algebras
  $$
  \rho: (\bigwedge (W^1 \oplus W^p), d) \to (\Omega^*(M),d)
  $$
 which induces an isomorphism in cohomology up to degree $p$ and an injection in degree $p+1$
 (see \cite{DGMS}). By Lemma \ref{lem:18'}, we have that
 \begin{align*}
  H^1(M) &=\la [dt]\ra, \\
H^k(M)& =0, \qquad 2\leq k\leq p-1, \\
  H^p(M) &=\ker (\varphi^*_p-\id) =K_1, \\
  H^{p+1}(M) &= \left( [dt]\wedge \coker (\varphi^*_p-\id)\right) \oplus \ker (\varphi^*_{p+1}-\id)
\end{align*}

We start by setting $\rho(a)=dt$, where $t$ is the coordinate of $[0,1]$ in the
description
  $$
  M=(N\x [0,1])/(x,0)\sim (\varphi(x),1)\,.
  $$
This automatically gives that $\rho$ induces an isomorphism in cohomology up to degree $p-1$.
Now let us go to degree $p$. Take a Jordan block of $\varphi^*_p$ for the
eigenvalue $\lambda=1$. Let $1\leq j_0\leq r$ be its size. Then we may take $v\in K_{j_0}\setminus
K_{j_0-1}$ in it. First, this implies that $v\notin I=\im (\varphi^*_p-\id)$. Set
 $$
 v_j=(\varphi^*_p-\id)^{j_0-j} v \in K_j\, ,
 $$
for $j=1,\ldots, j_0$. Now let $b_j$ denote the class of $v_j$ on $G_j=K_j/K_{j-1}$. Then
$d(b_j)=a\cdot b_{j-1}$. We want to define $\rho$ on $b_1,\ldots, b_{j_0}$. For
this, we need to construct forms $\tilde\alpha_1,\ldots,\tilde\alpha_{j_0}\in \Omega^p(M)$
such that $[\tilde\alpha_1]$ represents $v_1\in K_1=H^p(M)$, and
 $$
 d\tilde\alpha_j= dt \wedge \tilde\alpha_{j-1}\, .
 $$
Then we set $\rho(b_j)=\tilde\alpha_j$, and $\rho$ is a map of differential algebras.

We work inductively. Let $v_j=[\alpha_j]\in H^p(N)$. Here $\varphi^*[\alpha_j]-[\alpha_j]=
[\alpha_{j-1}]$. As $\varphi^*[\alpha_1]-[\alpha_1]=0$, we have that
$\varphi^*\alpha_1 =\alpha_1+ d\theta_1$. Set
  $$
  \tilde\alpha_1(x,t)= \alpha_1(x) + d(\zeta(t)\theta_1(x)),
 $$
where $\zeta: [0, 1] \to [0, 1]$ is a smooth function such that
$\zeta \equiv 0$ near $t = 0$ and $\zeta\equiv 1$ near $t = 1$.
Clearly, $[\tilde\alpha_1]=[\alpha_1]=v_1$.

Assume by induction that $\tilde\alpha_1,\ldots, \tilde\alpha_j$ have been constructed,
and moreover satisfying that
 $$
 [\tilde{\alpha}_{k}|_{N\x\{t\}}] = [\alpha_{k}] +
 \sum_{i=1}^{k-1} c_{ik}(t) [\alpha_i],
 $$
for some polynomials $c_{ik}(t)$, $k=1,\ldots, j$.
Note that the result holds for $k=1$.
To construct $\tilde\alpha_{j+1}$, we work as follows. We define
 $$
  \gamma_j(x)= \int_0^1 \left(\tilde\alpha_j - \sum_{i=1}^{j-1} c_i \tilde\alpha_i\right) dt
 $$
This is a closed form on $N$. The constants $c_i$ are adjusted so that
$[\gamma_j]=[\alpha_j]=v_j= \varphi^*[\alpha_{j+1}] -[\alpha_{j+1}]$. So we can write
  $$
  \gamma_j= \varphi^*\alpha_{j+1} -\alpha_{j+1}- d\theta_{j+1}
  $$
for some $(p-1)$-form $\theta_{j+1}$ on $N$. Write
 $$
 \hat{\alpha}_{j+1}= \int_0^t \left( \tilde{\alpha}_j(x,s)- \sum_{i=1}^{j-1} c_i \tilde\alpha_i(x,s)\right) 
 ds + \alpha_{j+1} + d(\zeta(t)\theta_{j+1}(x))\,.
 $$
This is a $p$-form well-defined in $M$ since $\varphi^*_p(\hat{\alpha}_{j+1}(x,0))=
\varphi^*_p(\alpha_{j+1})=\gamma_j+\alpha_{j+1} + d\theta_{j+1} =
\hat{\alpha}_{j+1}(x,1)$. Set
 $$
 \tilde{\alpha}_{j+1}=\hat{\alpha}_{j+1} + \sum_{i<j} c_i \tilde{\alpha}_{i+1}
 $$
Then
 $$
 d\tilde{\alpha}_{j+1}=dt \wedge \tilde{\alpha}_{j} \, .
 $$
Finally,
 $$
 [\tilde{\alpha}_{j+1}|_{N\x\{t\}}] = [\alpha_{j+1}] + \sum_{i=1}^j c_i(t) [\alpha_i] \, ,
 $$
for some $c_i(t)$, as required.

Repeating this procedure with all Jordan blocks, we finally get
  $$
  \rho: (\bigwedge (W^1 \oplus W^p), d) \to (\Omega^*(M),d).
  $$
Clearly $H^p(\bigwedge (W^1 \oplus W^p))=K_1$, so $\rho^*$ is an
isomorphism on degree $p$. For degree $p+1$,
 $H^{p+1}(\bigwedge (W^1 \oplus W^p))$ is generated by the
 elements $a\cdot b$, where $b\in G_{j_0}$ corresponds
 to some $v\in K_{j_0}$ generating a Jordan block (equivalently,
 $v\notin I$). These elements generate $\coker (\varphi^*_{p}-\id)$, i.e.
  $$
  H^{p+1}(\bigwedge (W^1 \oplus W^p)) \cong \coker (\varphi^*_{p}-\id) .
  $$
An element $v=v_{j_0}$ is sent, by $\rho$, to a $p$-form
$\tilde\alpha_{j_0}$ on $M$, which satisfies
  $$
   [\tilde\alpha_{j_0}|_{N\x\{t\}}] = [\alpha_{j_0}] + \sum_{i=1}^{j_0-1} c_i [\alpha_i]\, ,
  $$
for some $c_i=c_i(t)$, following the previous notations. Therefore the class $[dt\wedge\tilde\alpha_{j_0}]$
corresponds to $[dt]\wedge [\alpha_{j_0}]$, in the notation of Lemma \ref{lem:18'}.
So
 $$
  \rho^*: H^{p+1}(\bigwedge (W^1 \oplus W^p)) \to H^{p+1}(M)
  $$
is the injection into the subspace $[dt]\wedge \coker (\varphi^*_{p}-\id)$.
This completes the proof of the theorem.
 \end{proof}

 Note that, in the notation of Proposition \ref{prop:criterio1}, we have that $C^1=W^1$, $C^p=G_1$
 and $N^p= G_2 \oplus \ldots \oplus G_r$.
 Also take $w\in G_r$. Then $a \cdot w \in I(N)$, $d(a\cdot w)=0$, but $a\cdot w$ is not
 exact. Hence

 \begin{corollary} \label{mapping-torus-p-formal}
 Under the conditions of Theorem \ref{thm:minimal-model-mapping-torus}, if $r\geq 2$ then $M$ is non-formal. Moreover, if $r=1$, then $M$ is $p$-formal (in the sense of Definition \ref{def:primera}).
  \end{corollary}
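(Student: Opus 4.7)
The plan is to handle the two cases separately, relying on the partial minimal model supplied by Theorem \ref{thm:minimal-model-mapping-torus}; essentially all of the real work has already been done there.

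For the case $r = 1$, I expect the argument to be immediate. Then $W^p = G_1$ and for every generator $w \in G_1$ one has $dw = a \cdot F(w) = 0$ since $F\colon G_1 \to G_0 = 0$. Thus every generator of the minimal model in degree at most $p$ is closed, and I would set $C^i = V^i$ and $N^i = 0$ for each $i \le p$. The three conditions of Definition \ref{def:primera} then hold vacuously (the ideal $I_p$ is $\{0\}$), so $M$ is $p$-formal.

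For $r \ge 2$ the cleanest route is to invoke Theorem \ref{thm:19}. By the multiplicity convention fixed in its footnote, $r$ is precisely the multiplicity of $\lambda = 1$ as a root of the minimal polynomial of $\varphi^*_p$, so $r \ge 2$ is exactly the hypothesis of Theorem \ref{thm:19}, which then produces a non-vanishing triple Massey product and forces non-formality. I would also sketch the more self-contained alternative suggested in the paragraph preceding the corollary: apply Proposition \ref{prop:criterio1} with the canonical splitting $C^1 = W^1$, $C^p = G_1$, $N^p = G_2 \oplus \cdots \oplus G_r$; for any nonzero $w \in G_r$ one has $a \cdot w \in I(N)$, and $d(a \cdot w) = -a \cdot a \cdot F(w) = 0$, so $a \cdot w$ is closed.

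The remaining and only mildly delicate point is to verify that $a \cdot w$ is not exact in the full minimal model. The proof of Theorem \ref{thm:minimal-model-mapping-torus} identifies $\rho^*\colon H^{p+1}(\bigwedge(W^1 \oplus W^p)) \to H^{p+1}(M)$ as the injection onto $[dt] \wedge \coker(\varphi^*_p - \id)$, and under this identification $[a \cdot w]$ corresponds to the class in the cokernel of any representative $\alpha \in K_r \setminus K_{r-1}$ of $w$. The Jordan-structure fact I need is $\im(\varphi^*_p - \id) \cap K_r \subseteq K_{r-1}$: if $(\varphi^*_p - \id) u \in K_r$, then $u \in K_{r+1} = K_r$ (since $r$ is the top multiplicity), and hence $(\varphi^*_p - \id) u \in K_{r-1}$. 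Thus $[\alpha] \ne 0$ in the cokernel, $a \cdot w$ is a closed non-exact element of $I(N)$, and Proposition \ref{prop:criterio1} then precludes formality.
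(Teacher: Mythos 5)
Your argument is correct and, in its self-contained form, coincides with the paper's proof: for $r=1$ every generator up to degree $p$ is closed (so $N^i=0$ and $p$-formality is immediate), while for $r\geq 2$ one takes $C^p=G_1$, $N^p=G_2\oplus\cdots\oplus G_r$ and observes that $a\cdot w$ for $w\in G_r$ is a closed element of the ideal $I(N^p)$ which is not exact --- and your justification of the non-exactness via the injectivity of $\rho^*$ in degree $p+1$ is precisely the point the paper leaves implicit. The one caveat is your assertion that ``$r\geq 2$ is exactly the hypothesis of Theorem \ref{thm:19}'': that theorem is stated for multiplicity \emph{exactly} two, so invoking it for $r>2$ is not literally licensed as written; this is harmless here, since your direct minimal-model argument (which is the paper's) does not rely on it.
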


Applying this to symplectic mapping tori, we have the following.
Let $N$ be a compact symplectic $2n$-manifold, and assume that $\varphi:N\to N$ is a
symplectomorphism such that the map induced on cohomology
$\varphi^*_1: H^1(N) \to H^1(N)$ does not have the eigenvalue
$\lambda=1$. As $\varphi^*_2: H^2(N) \to H^2(N)$
always has the eigenvalue $\lambda=1$ ($\varphi^*$ fixes the symplectic form),
then we have that $N_{\varphi}$ is $2$-formal if and only if
the eigenvalue $\lambda=1$ of $\varphi_2^*$ has multiplicity $r=1$.

If $n=2$, then $N_\varphi$ is a $5$-dimensional co-symplectic manifold with $b_1=1$.
In dimension $5$, Theorem \ref{fm2:criterio2} says that $2$-formality is equivalent to formality.
Therefore we have the following result:

\begin{corollary}
 $5$-dimensional non-formal co-symplectic manifolds with $b_1=1$ are given
as mapping tori of symplectomorphisms $\varphi:N\to N$ of compact symplectic
$4$-manifolds $N$ where $ \varphi^*_1$ does not have the eigenvalue $\lambda=1$ and
$\varphi^*_2$ has the eigenvalue $\lambda=1$ with multiplicity $r\geq 2$.
 \end{corollary}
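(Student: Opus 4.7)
The plan is to assemble the corollary from three ingredients already in the paper: Li's structure theorem for co-symplectic manifolds, Corollary \ref{mapping-torus-p-formal} applied at $p=2$, and Theorem \ref{fm2:criterio2} to upgrade $2$-formality to formality in dimension $5$.

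First, I would invoke Li's theorem to write any compact $5$-dimensional co-symplectic manifold $M$ as a symplectic mapping torus $M=N_\varphi$, where $(N,\omega)$ is a compact symplectic $4$-manifold and $\varphi\colon N\to N$ is a symplectomorphism. Next I would translate the hypothesis $b_1(M)=1$ into a condition on $\varphi^*_1$. Applying Lemma \ref{lem:18'} in degree $p=1$ gives the short exact sequence
$$
0\to C^0\to H^1(M)\to K^1\to 0,
$$
and since $\varphi^*$ is the identity on $H^0(N)\cong \bR$, we have $C^0\cong \bR$. Hence $b_1(M)=1+\dim K^1$, so $b_1(M)=1$ is equivalent to $K^1=0$, i.e.\ $\varphi^*_1\colon H^1(N)\to H^1(N)$ not having the eigenvalue $\lambda=1$. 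On the other hand, because $\varphi^*[\omega]=[\omega]$ and $[\omega]\neq 0$ in $H^2(N)$, the map $\varphi^*_2$ always has the eigenvalue $\lambda=1$, with some multiplicity $r\geq 1$.

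With these two facts in hand, the hypothesis of Theorem \ref{thm:minimal-model-mapping-torus} at $p=2$ is satisfied: $\varphi^*_k-\id$ is invertible for $k\leq p-1=1$, and $\varphi^*_p$ has the eigenvalue $\lambda=1$ with multiplicity $r\geq 1$. Corollary \ref{mapping-torus-p-formal} then tells us that $M$ is $2$-formal precisely when $r=1$, and is (even forced to be) non-formal when $r\geq 2$. Finally I would appeal to Theorem \ref{fm2:criterio2}: a compact orientable $5$-manifold is formal if and only if it is $2$-formal, because here $2n-1=5$ gives $n-1=2$. Combining these, $M$ is non-formal if and only if $r\geq 2$, yielding exactly the characterization stated in the corollary.

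The main thing to be careful about is bookkeeping rather than any real obstacle: one must verify that $b_1(M)=1$ really is equivalent to $\varphi^*_1$ not having $\lambda=1$ (which is immediate from Lemma \ref{lem:18'} but would be easy to misstate), and one must note that the symplectomorphism hypothesis is exactly what forces $\varphi^*_2$ to have the eigenvalue $1$, so that the case $r=0$ does not arise. After that, the corollary is a direct reading off of Corollary \ref{mapping-torus-p-formal} combined with Theorem \ref{fm2:criterio2}.
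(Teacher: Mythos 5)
Your proposal is correct and follows essentially the same route as the paper: the discussion preceding the corollary applies Theorem \ref{thm:minimal-model-mapping-torus} and Corollary \ref{mapping-torus-p-formal} at $p=2$ (using that a symplectomorphism fixes $[\omega]$, so $\varphi^*_2$ always has eigenvalue $1$) and then upgrades $2$-formality to formality in dimension $5$ via Theorem \ref{fm2:criterio2}. The only difference is that you make explicit two points the paper leaves implicit, namely the appeal to Li's theorem to realize any compact co-symplectic $5$-manifold as a symplectic mapping torus and the equivalence $b_1(M)=1\Leftrightarrow K^1=0$ from Lemma \ref{lem:18'}; both are correct.
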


Finally, let us mention that an analogue of Theorem \ref{thm:minimal-model-mapping-torus}
for $p=1$ is harder to obtain. However, at least we can still say that if
$\lambda=1$ is an eigenvalue of $\varphi^*_1$ with
multiplicity $r\geq 2$, then $M=N_\varphi$ is non-formal (by Remark \ref{rem:otro}).
Also one can also obtain a {\em non-formal} mapping torus
such that $\lambda=1$ is an eigenvalue of $\varphi^*_1$ with multiplicity $r=1$, e.g. by taking
a non-formal symplectic nilmanifold $N$ and multiplying it by $S^1$.
Next, we give an example of a $5$-dimensional {\em formal} mapping torus
$N_{\varphi}$ with no co-symplectic structure and such that $\lambda=1$ is
an eigenvalue of $\varphi^*_1$ with multiplicity $r=1$.

Let $G(k)$ be the simply connected completely solvable\footnote{A solvable Lie group $G$ is \textit{completely solvable} if for every $X\in\fg$, the eigenvalues of the map $\textrm{ad}_X$ are real.} $3$-dimensional
Lie group defined by the equations
$$
de^1 = -k e^{1}\wedge e^{3}, \quad  de^2 = k e^{2}\wedge e^{3}, \quad de^3=0,
$$
where $k$ is a real number such that $\exp(k) + \exp(-k)$ is
an integer different from $2$. 

Let $\Gamma(k)$ be a discrete subgroup of $G(k)$ such that the quotient space
$P(k)=\Gamma(k)\backslash G(k)$ is compact (such a subgroup $\Gamma(k)$ 
always exists; see \cite{OT} for example). Then $P(k)$ is a completely solvable solvmanifold.

We can use Hattori's theorem \cite{Hattori} which asserts that the de Rham cohomology
ring $H^*(P(k))$ is isomorphic to the cohomology ring $H^*({\fg}^*)$ of the Lie algebra
$\fg$ of $G(k)$. For simplicity we denote the left invariant
forms $\{e^i\}$, $i=1,2,3$, on $G(k)$ and their projections on $P(k)$ by the same symbols.
Thus, we obtain
\begin{itemize}
\item $H^0(P(k))=\langle 1\rangle$,
\item $H^1(P(k))=\langle [e^3]\rangle$,
\item $H^2(P(k))=\langle [e^{12}]\rangle$,
\item $H^3(P(k))=\langle [e^{123}]\rangle$.
\end{itemize}

Therefore, there exists a real number $a$ such that the cohomology class
$a[e^{12}]$ is integral. Hence there exists a principal circle bundle
$\pi: N(k) \to P(k)$ with Euler class $a[e^{12}]$ and a connection $1$-form $e^4$
whose curvature form is $ae^{12}$ (we use the same notation for differential
forms on the base space $P(k)$ and their pullbacks via $\pi$ to the total space
$N(k)$).

One can check that the de Rham cohomology groups $H^*(N(k))$ are:
\begin{itemize}
\item $H^0(N(k))=\langle 1\rangle$,
\item $H^1(N(k))=\langle [e^3]\rangle$,
\item $H^2(N(k))=0$,
\item $H^3(N(k))=\langle [e^{124}]\rangle$,
\item $H^4(N(k))=\langle [e^{1234}]\rangle$.
\end{itemize}

Moreover, the manifold $N(k)$ is formal. In fact,
let $(\Omega^*(N(k)),d)$ be the
de Rham complex of differential forms on $N(k)$. The minimal
model of $N(k)$ is a differential graded algebra
$(\mathcal {M},d)$, with
 $$
 {\mathcal M}=\bigwedge(a,b),
 $$
where the generator $a$ has
degree $1$, the generator $b$ has degree 3, and $d$ is given
by $da=db=0$.
The morphism $\rho\colon {\mathcal M} \to \Omega^*(N(k))$, inducing
an isomorphism on cohomology, is defined by
 \begin{align*}
 \rho(a) &=e^3, \\
 \rho(b) &= e^{124}.
 \end{align*}
According to Definition \ref{def:primera}, we have $C^1=\la a\ra$
and $N^1=0$. Thus {\em $N(k)$ is $1$-formal} and hence
it is formal by Theorem \ref{fm2:criterio2}.

Now, let $M$ be the $5$-dimensional compact manifold defined as 
$M=N(k) \times S^1$. Denote by $e^5$
the canonical $1$-form on $S^1$. Then $M$ is formal.  Clearly 
$M$ is a mapping torus. But $M$ does not admit
co-symplectic structures since $H^2(M)=\langle [e^{35}]\rangle$, and so
any closed $2$-form $F$ satisfies $F^2=0$.

%
%
\section{Geography of non-formal co-symplectic compact manifolds} \label{sec:5}

In this section we consider the following problem:
\begin{quote}
  For which pairs $(m=2n+1,b)$, with $n,b \geq 1$,
 are there compact co-symplectic manifolds of dimension $m$ and with $b_1=b$
 which are non-formal?
\end{quote}

It will turn out that the answer is the same
as for compact smooth manifolds \cite{FM1}, i.e., that there are non-formal examples
if and only if
$m=3$ and $b \geq 2$, or $m \geq 5$ and $b \geq 1$. We start with some
straightforward examples:

\begin{itemize}
 \item For $b=1$ and $m\geq 9$, we may take
 a compact non-formal symplectic
 manifold $N$ of dimension $m-1\geq 8$ and simply-connected. Such manifold
 exists for dimensions $\geq 10$ by \cite{BT}, and for dimension equal to $8$ by \cite{FM3}. Then consider $M=N\times S^1$.
 \item For $m=3$, $b=2$, we may take the $3$-dimensional nilmanifold $M_{0}$ defined by the structure equations
 $de^1=de^2=0$, $de^3=e^1\wedge e^2$.
 This is non-formal since it is not a torus. The pair
 $\eta= e^1$, $F=e^2\wedge e^3$ defines a co-symplectic structure on $M_0$ since
 $d\eta =dF=0$ and $\eta\wedge F \not=0$.
 \item For $m\geq 5$ and $b\geq 2$ even, take the co-symplectic compact manifold
$M=M_{0}\times \Sigma_{k} \times (S^2)^{\ell}$, where $\Sigma_k$ is the surface of genus $k\geq 0$, $\ell\geq 0$,
and $(S^2)^{\ell}$ is the product of $\ell$ copies of $S^2$. Then
$\dim M=m=5+2\ell$ and $b_1(M)=2+2k$.
 \item For $m=5$ and $b=3$, we can take $M_1=N  \times S^1$, where $N$ is a compact $4$-dimensional symplectic manifold with
$b_1=2$. For example, take $N$ the compact nilmanifold defined by the equations $de^1=de^2=0$, $de^3=e^1\wedge e^2$,
$de^4=e^1\wedge e^3$, which is non-formal and symplectic with $\omega=e^1\wedge e^4 + e^2\wedge e^3$.
 \item For $m\geq 7$ and $b\geq 3$ odd, take $M=M_1\times \Sigma_k\times (S^2)^{\ell}$, $k,\ell\geq 0$.
\end{itemize}

Other examples with $b_1 = 2$ and $m=5$ can be obtained from the list of $5$-dimensional compact nilmanifolds.
According to the classification in \cite{BM, Mag} of nilpotent Lie algebras of dimension $<7$,
there are $9$ nilpotent Lie algebras $\mathfrak{g}$ of dimension $5$, and only $3$ of them
satisfy $\dim H^1({\mathfrak{g}}^*)=2$, namely
$$
(0,0,12,13,14+23), \quad (0,0,12,13,14), \quad (0,0,12,13,23).
$$

In the description of the Lie algebras $\fg$, we are using the structure equations
with respect to a basis $e^1,\ldots,e^5$ of the dual space $\fg^*$. For instance,
$(0,0,12,13,14+23)$ means that
there is a basis $\{e^j\}_{j=1}^5$ satisfying
$d e^1=d e^2=0$, $d e^3=e^{1}\wedge e^{2}$, $d e^4=e^{1}\wedge e^{3}$ and
$d e^5=e^{1}\wedge e^{4}+e^{2}\wedge e^{3}$; equivalently, the Lie bracket is
given in terms of its dual basis $\{e_j\}_{j=1}^5$ by
$[e_1,e_2]=-e_3$, $[e_1,e_3]=-e_4$, $[e_1,e_4]=[e_2,e_3]=-e_5$.
Also, from now on we write $e^{ij}=e^{i}\wedge e^{j}$.

\begin{proposition} \label{co-symplectic-nilmanifolds}
Among the $3$ nilpotent Lie algebras $\mathfrak{g}$ of dimension $5$
with $\dim H^1({\mathfrak{g}}^*)=2$, those that have a co-symplectic structure
are
 $$
 (0,0,12,13,14+23), \quad (0,0,12,13,14).
 $$
\end{proposition}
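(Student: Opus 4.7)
My plan is to handle the three Lie algebras separately, exploiting the common feature that $de^1=de^2=0$ in each case, so every closed $1$-form has the form $\eta=ae^1+be^2$ with $a,b\in\bR$. For the first two algebras I will exhibit an explicit co-symplectic pair $(\eta,F)$; for the third I will show that $\eta\wedge F^2=0$ for every pair $(\eta,F)$ of closed forms, ruling out the existence of any co-symplectic structure.

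For both $(0,0,12,13,14+23)$ and $(0,0,12,13,14)$ the plan is to try $\eta=e^1$ and $F=e^{25}-e^{34}$. A short computation using $d(e^{ij})=de^i\wedge e^j-e^i\wedge de^j$ shows that $d(e^{25})=d(e^{34})=e^{124}$ in both algebras (the details depend on $de^5$, but the extra $e^{23}$ summand appearing in the first case is annihilated by $e^2\wedge e^{23}=0$). Hence $dF=0$, and since $F^2=-2\,e^{25}\wedge e^{34}=-2\,e^{2345}$, we conclude $\eta\wedge F^2=-2\,e^{12345}$, which is a volume form.

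The main content is the negative case $(0,0,12,13,23)$. I would first compute all the elementary differentials; the non-vanishing ones are $d(e^{15})=-e^{123}$, $d(e^{24})=e^{123}$, $d(e^{34})=e^{124}$, $d(e^{35})=e^{125}$ and $d(e^{45})=e^{135}-e^{234}$. Imposing $dF=0$ on a generic $2$-form $F=\sum F_{ij}\,e^{ij}$ forces $F_{24}=F_{15}$ and $F_{34}=F_{35}=F_{45}=0$, so every closed $2$-form decomposes as $F=F_{12}e^{12}+G$, where $G$ is a linear combination of the six $2$-forms $e^{13},e^{14},e^{15},e^{23},e^{24},e^{25}$. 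The decisive observation is that every summand of $G$ is of the form $e^{ij}$ with exactly one index in $\{1,2\}$ and one in $\{3,4,5\}$; therefore $e^{12}\wedge G=0$, and any wedge product of two terms of $G$ either vanishes (when the two $\{1,2\}$-indices coincide) or lies in the span of $e^{1234},e^{1235},e^{1245}$. Consequently $F^2=G^2$ lies in this span, and wedging with $\eta=ae^1+be^2$ produces zero, since each of these three $4$-forms already contains both $e^1$ and $e^2$ as factors. This is the hardest part of the argument: one must classify \emph{all} closed $2$-forms rather than rule out specific guesses, and then exploit the index-parity that makes the obstruction structural.
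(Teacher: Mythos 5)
Your proposal is correct and follows essentially the same route as the paper: for the two positive cases you exhibit exactly the pair $\eta=e^1$, $F=e^{25}-e^{34}$ that the paper uses, and for $(0,0,12,13,23)$ you carry out the direct verification that the paper merely alludes to (offering as alternatives ``check it directly'' or a citation of the non-existence of a symplectic form on the product with $\bR$). Your computations check out — in particular the list of non-closed $e^{ij}$, the resulting constraints $F_{24}=F_{15}$, $F_{34}=F_{35}=F_{45}=0$, and the index-parity observation that forces $F^2=G^2\in\langle e^{1234},e^{1235},e^{1245}\rangle$ and hence $\eta\wedge F^2=0$ for every closed $\eta=ae^1+be^2$.
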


\begin{proof}
Clearly the forms $\eta$ and $F$ given by
$$
\eta= e^1,   \quad  F=e^{25}-e^{34}
$$
satisfy $d\eta =dF=0$ and $\eta\wedge F^2 \not=0$, and so
they define a co-sympectic structure on each of those Lie algebras.

To prove that the Lie algebra $(0,0,12,13,23)$
does not admit a co-sympectic structure, 
one can check it 
directly or use the fact that the direct sum of $(0,0,12,13,23)$ with the $1$-dimensional
Lie algebra has no symplectic form \cite{BM}.
\end{proof}

\begin{remark}\label{remark:4}
Let $N$ denote the $5$-dimensional compact
nilmanifold associated to the Lie algebra $\mathfrak{n}$ with structure
$(0,0,12,13,23)$. Then $N$ has a closed $1$-form; indeed, $de^1=de^2=0$. By
Tischler's theorem \cite{Tischler}, $N$ is a mapping torus. However, it is not
a \textit{symplectic} mapping torus, since it is not co-symplectic. We describe
this mapping torus explicitly. Since $N$ is a nilmanifold, we can describe
the structure at the level of Lie algebras.
The map $\mathfrak{n}\to\bR$, $(e_1,\ldots,e_5)\to e_1$ gives an exact sequence
\begin{equation}\label{extension}
0\longrightarrow \mathfrak{k}\longrightarrow\mathfrak{n}\longrightarrow\bR\longrightarrow 0
\end{equation}
of Lie algebras, and one sees immediately that $\mathfrak{k}$ is a $4$-dimensional symplectic
nilpotent Lie algebra, spanned by $e_2,\ldots,e_5$, with structure $(0,0,0,23)$
(with respect to the dual basis of $\mathfrak{k}^*$). The fiber of the corresponding bundle over $S^1$
is the Kodaira-Thurston manifold $KT$. Taking into account
the proof of Proposition \ref{prop:107}, the Lie algebra
extension (\ref{extension}) is associated to the derivation
$D=\mathrm{ad}(e_1)$ of $\mathfrak{k}$. In other words,
$\mathfrak{n}=\bR\oplus_D\mathfrak{k}$. A computation shows that
this derivation is not symplectic with respect to \textit{any} symplectic form on $\mathfrak{k}$ and Proposition \ref{prop:107} implies
that $\mathfrak{n}$ is not co-symplectic.
The map $\varphi:=\exp(D)$ is a diffeomorphism of $KT$ which does
not preserve any symplectic structure of $KT$, and $N=KT_{\varphi}$.
\end{remark}

The previous examples leave some gaps, notably the
cases $m=3$, $b\geq 3$, and $m=5$, $b=1$.
By \cite{FM1}, we know that there are compact non-formal manifolds with
these Betti numbers and dimensions. Let us see that there are also non-formal
co-symplectic manifolds in these cases.

\begin{proposition}\label{m=3}
  There are non-formal compact co-symplectic manifolds
  with $m \geq 3$, $b_1\geq 2$.
\end{proposition}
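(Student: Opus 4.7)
The plan is to cover the cases in the statement of Proposition \ref{m=3} that are not already handled by the preceding product constructions. Those constructions exhaust every pair $(m,b_1)$ with $m\geq 3$ and $b_1\geq 2$ except for $m=3$ with $b_1\geq 3$, which is exactly what the mapping torus machinery of Section \ref{non-formality-mapping-torus} was designed to address. Concretely, for a fixed $b\geq 3$ I will set $g=b-1\geq 2$ and build a mapping torus of a carefully chosen symplectomorphism of a closed oriented genus-$g$ surface $\Sigma_g$.

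First, on $\Sigma_g$ equipped with an area form $\omega$, I fix a symplectic basis $\{\alpha_1,\beta_1,\ldots,\alpha_g,\beta_g\}$ of $H^1(\Sigma_g;\bR)$ and introduce the unipotent symplectic lattice automorphism $A\in\mathrm{Sp}(2g,\Z)$ given by $A(\alpha_i)=\alpha_i$ and $A(\beta_i)=\alpha_i+\beta_i$. Then $A\neq\id$ while $(A-\id)^2=0$, so $1$ is a root of the minimal polynomial of $A$ of multiplicity exactly $2$ in the sense of the footnote to Theorem \ref{thm:19}, and $\dim\ker(A-\id)=g$. Second, I realize $A$ as the cohomology action of a symplectomorphism: the surjectivity of the representation from the mapping class group of $\Sigma_g$ onto $\mathrm{Sp}(2g,\Z)$ provides an orientation-preserving diffeomorphism $\varphi_0\colon\Sigma_g\to\Sigma_g$ with $\varphi_0^{*}=A$, and Moser's theorem isotopes $\varphi_0$ to an area-preserving diffeomorphism $\varphi$. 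Since isotopic diffeomorphisms induce the same map on cohomology, $\varphi^{*}=A$ and $\varphi$ is a symplectomorphism of $(\Sigma_g,\omega)$.

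Third, I form $M=(\Sigma_g)_\varphi$, which by Li's theorem is a compact co-symplectic $3$-manifold. Lemma \ref{lem:18'} applied in degree $p=1$ gives $b_1(M)=\dim C^0+\dim K^1=1+g=b$, and Theorem \ref{thm:19} applied with $p=1$ (its hypotheses being met because $1$ is an eigenvalue of $\varphi^{*}_1$ of minimal-polynomial multiplicity $2$) produces a non-vanishing triple Massey product on $M$. Therefore $M$ is non-formal and has the prescribed first Betti number, completing the construction.

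The only genuine obstacle is the second step, namely promoting an abstract unipotent symplectic matrix to the action on cohomology of a concrete area-preserving diffeomorphism; this is routine once one invokes both the surjectivity of the mapping-class-group representation on $H_1$ and Moser's lemma, but without these two classical inputs the construction would not be available in the smooth category.
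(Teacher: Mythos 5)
Your core construction is correct and is essentially the paper's own: both arguments take the mapping torus of an area-preserving diffeomorphism of a surface whose action on $H^1$ is unipotent with $(\varphi^*-\id)^2=0$ but $\varphi^*\neq\id$, compute $b_1$ from Lemma \ref{lem:18'}, and invoke Theorem \ref{thm:19} with $p=1$ to produce a non-vanishing triple Massey product. The only real difference is the choice of matrix: the paper uses a single block $\left(\begin{smallmatrix}1&0\\1&1\end{smallmatrix}\right)$ padded with identity blocks on $\Sigma_k$ (giving $b_1=2k$) and two such blocks (giving $b_1=2k-1$), whereas you take the full unipotent $A(\beta_i)=\alpha_i+\beta_i$ on $\Sigma_{b-1}$, which realizes every value $b\geq 2$ in one uniform family; that is a clean variant, and your explicit realization of $A$ by a symplectomorphism (surjectivity of the mapping class group onto $\mathrm{Sp}(2g,\Z)$ followed by Moser) spells out a point the paper only gestures at when it remarks that an orientation-preserving surface diffeomorphism is isotopic to an area-preserving one.

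There is, however, one concrete gap in your reduction: the preceding product constructions do \emph{not} exhaust all pairs except $(m,b)=(3,b\geq 3)$. The case $m=5$ with $b_1\geq 5$ odd is left uncovered, because the listed odd-$b_1$ products $M_1\times\Sigma_k\times(S^2)^{\ell}$ have dimension at least $7$ as soon as $b_1>3$, and the $m=5$ item only produces $b_1=3$. Since your proof manufactures only $3$-dimensional examples, it does not by itself establish the proposition for all $m\geq 3$. The paper closes this by taking its $3$-manifolds $M$ and forming $M\times(S^2)^{\ell}$, which is again co-symplectic (use $\eta$ and $F+\omega_{S^2}$), has the same $b_1$, and remains non-formal because $M$ is a retract of the product. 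Appending that one line to your construction --- $(\Sigma_g)_\varphi\times(S^2)^{\ell}$ --- makes the argument complete.
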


\begin{proof}
  We consider the symplectic surface $\Sigma_k$ of genus $k\geq 1$.
  Consider a symplectomorphism $\varphi:\Sigma_k\to \Sigma_k$ such that
  $\varphi^*:H^1(\Sigma_k) \to H^1(\Sigma_k)$ has the form
   $$
   \varphi^* = \left( \begin{array}{cc} 1 & 0 \\ 1 & 1\end{array} \right)
   \oplus \left( \begin{array}{cc} 1 & 0 \\ 0 & 1\end{array} \right)
   \oplus \ldots \oplus
   \left( \begin{array}{cc} 1 & 0 \\ 0 & 1\end{array} \right),
   $$
  with respect to a symplectic basis
 $\xi_1,\xi_2,\ldots, \xi_{2k-1},\xi_{2k}$ of $H_1(\Sigma_k)$.
  Consider the mapping torus $M$ of $\varphi$. The symplectic form
  of $\Sigma_k$ induces a closed $2$-form $F$ on $M$. The pull-back
  $\eta$ of the volume form of $S^1$ under $M\to S^1$ is closed and
  satisfies that  $\eta \wedge F >0$. Therefore $M$ is co-symplectic.

 Now $\varphi^{*}\xi_1 = \xi_1 + \xi_2$ and $\varphi^{*}\xi_i = \xi_i$,
 for $2\leq i\leq 2k$. By Lemma \ref{lem:18'}, the cohomology of $M$ is
   \begin{align*}
   H^1(M) &= \langle a, \xi_2,\ldots, \xi_{2k-1},\xi_{2k} \rangle, \\
   H^2(M) &= \langle F, a \, \xi_1, a\,  \xi_3, \ldots, a\,  \xi_{2k-1}, a\, \xi_{2k} \rangle,
   \end{align*}
   where $a=[\eta]$. So $b_1=2k\geq 2$.
   By  Theorem \ref{thm:19}, the Massey product
  $\la a,a, \xi_2\ra$ does not vanish and so $M$ is non-formal.

   Similarly, take $\Sigma_k$ where $k\geq 2$. We consider
  a symplectomorphism $\psi:\Sigma_k\to \Sigma_k$ such
  that $\psi^*:H^1(\Sigma_k) \to H^1(\Sigma_k)$ has the form
   $$
   \psi^* = \left( \begin{array}{cc} 1 & 0 \\ 1 & 1\end{array} \right)
   \oplus  \left( \begin{array}{cc} 1 & 0 \\ 1 & 1\end{array} \right)
   \oplus \left( \begin{array}{cc} 1 & 0 \\ 0 & 1\end{array} \right)
   \oplus \ldots \oplus
   \left( \begin{array}{cc} 1 & 0 \\ 0 & 1\end{array} \right) .
   $$
   Then the mapping torus $M$ of $\psi$ has $b_1=2k-1 \geq 3$ and odd, and
   $M$ is co-symplectic and non-formal.

   For higher dimensions, take $M\times (S^2)^{\ell}$, $\ell\geq 0$.
\end{proof}

\begin{remark}
 Notice that the case $k=1$ in the first part of the previous proposition yields
 another description of the Heisenberg manifold.
\end{remark}

\begin{proposition}\label{prop:3}
  There are non-formal compact co-symplectic manifolds
  with $m \geq 5$, $b_1= 1$.
\end{proposition}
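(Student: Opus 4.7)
The plan is to apply Theorem \ref{thm:19} to a carefully chosen symplectomorphism of the $4$-torus, obtaining a non-formal co-symplectic $5$-manifold with $b_1=1$, and to cover every odd $m\geq 7$ by taking products with $2$-spheres.

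Concretely, I would take $N=T^4=\R^4/\Z^4$ with the standard symplectic form $\omega=dx_1\wedge dx_3+dx_2\wedge dx_4$ and a linear symplectomorphism $\varphi$ induced by some $A\in\Sp(4,\Z)$. By Lemma \ref{lem:18'} the mapping torus $M=N_\varphi$ has $b_1(M)=1$ iff $A-\id$ acts invertibly on $\R^4\cong H^1(T^4)$, and by Theorem \ref{thm:19} (whose proof in fact goes through whenever the multiplicity of $\lambda=1$ is at least two; alternatively, apply Corollary \ref{mapping-torus-p-formal} with $p=2$) $M$ is non-formal as soon as $\wedge^2 A=\varphi^*_2$ has eigenvalue $1$ with minimal-polynomial multiplicity $\geq 2$. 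The problem is therefore reduced to exhibiting one such $A$.

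The delicate point is semisimplicity: the eigenvalues of any $A\in\Sp(4,\Z)$ come in reciprocal pairs $\mu^{\pm 1},\nu^{\pm 1}$, so $1$ is automatically an eigenvalue of $\wedge^2 A$ of algebraic multiplicity at least two (from $\mu\mu^{-1}$ and $\nu\nu^{-1}$); the naive block-diagonal choice $A=B\oplus B^{-t}$, however, leaves $\wedge^2 A$ diagonalizable there and, by Corollary \ref{mapping-torus-p-formal} together with Theorem \ref{fm2:criterio2} in dimension $5$, forces $M$ to be formal. To prevent this I would use the sheared symplectic form
$$
A=\begin{pmatrix}B & BS\\ 0 & B^{-t}\end{pmatrix},
$$
which lies in $\Sp(4,\Z)$ for any symmetric $S$ and $B\in\SL(2,\Z)$. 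Choosing $B$ hyperbolic with $1$ not an eigenvalue, e.g.\ $B=\bigl(\begin{smallmatrix}2&1\\1&1\end{smallmatrix}\bigr)$, and $S$ nonzero, e.g.\ $S=\bigl(\begin{smallmatrix}0&1\\1&0\end{smallmatrix}\bigr)$, a short computation shows that $A-\id$ is invertible and that $A$ has Jordan shape $J_2(\mu)\oplus J_2(\mu^{-1})$; expressing $\wedge^2 A$ in a Jordan basis of $A$ then exhibits a genuine Jordan block of size $\geq 2$ at $\lambda=1$, as required.

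With $M=T^4_\varphi$ in hand, Theorem \ref{thm:19} provides a non-vanishing triple Massey product $\langle[dt],[dt],[\tilde\alpha]\rangle$, so $M$ is a non-formal compact co-symplectic $5$-manifold with $b_1(M)=1$. For $m=5+2\ell\geq 7$ I would take $M\times(S^2)^{\ell}$ with co-symplectic pair $(F_M+\sum_i\sigma_i,\eta_M)$, where $\sigma_i$ is the area form of the $i$-th $S^2$-factor; this preserves $b_1=1$ and inherits non-formality through the Künneth formula since every $S^2$ is formal and the Massey product on $M$ pulls back non-trivially. The main obstacle throughout is the semisimplicity question for $\wedge^2 A$: arranging for $\lambda=1$ to appear in the minimal polynomial of $\wedge^2 A$ with multiplicity $\geq 2$ while keeping it off the spectrum of $A$ itself is precisely what rules out block-diagonal symplectic matrices and forces the shear $BS$.
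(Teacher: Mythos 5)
Your proposal is correct and follows essentially the same route as the paper: the paper also takes the mapping torus of a linear symplectomorphism of $T^4$ whose action on $H^1$ has no eigenvalue $1$ (there $\varphi^*=(-\id)\oplus\bigl(\begin{smallmatrix}-1&0\\1&-1\end{smallmatrix}\bigr)$, so all eigenvalues equal $-1$) but whose action on $H^2$ has $\lambda=1$ with a nontrivial Jordan block, invokes Theorem \ref{thm:19}, and covers higher dimensions by multiplying with $(S^2)^{\ell}$. The only point to flag is that for your hyperbolic $A$ the eigenvalue $1$ of $\wedge^2A$ carries a Jordan block of size $3$, so its multiplicity in the minimal polynomial exceeds the value two assumed in the statement of Theorem \ref{thm:19}, and the clean citation is Corollary \ref{mapping-torus-p-formal} with $p=2$ and $r\geq 2$ --- which you already supply.
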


\begin{proof}
It is enough to construct an example for $m=5$.
Take the torus $T^4$ and the mapping torus $T^4_{\varphi}$ of the
symplectomorphism $\varphi:T^4 \to T^4$ such that
\begin{equation}\label{eqn:eqn}
\varphi^* = \left( \begin{array}{cccc}
-1 & 0 & 0 & 0 \\
0 & -1 & 0 & 0 \\
0 & 0 &- 1 & 0 \\
0 & 0 & 1 & -1
\end{array} \right)
\end{equation}
on $H^1(T^4)$. Taking $\eta$ the pull-back of the $1$-form $\theta$ on $S^1$ and
$F=e^1\wedge e^2+e^3\wedge e^4$, we have that $T^4_{\varphi}$ is co-symplectic.
The map $\varphi^*$ on $H^2(T^4)$ satisfies:
\begin{align*}
\varphi^*(e^1\wedge e^2) &= e^1\wedge e^2 \\
\varphi^*(e^1\wedge e^3) &= e^1\wedge e^3 -e^1\wedge e^4 \\
\varphi^*(e^1\wedge e^4) &= e^1\wedge e^4 \\
\varphi^*(e^2\wedge e^3) &= e^2\wedge e^3 - e^2\wedge e^4 \\
\varphi^*(e^2\wedge e^4) &= e^2\wedge e^4 \\
\varphi^*(e^3\wedge e^4) &= e^3\wedge e^4
\end{align*}
Then $b_1(T^4_{\varphi})=1$ as $H^1(T^4_{\varphi}) = \langle a \rangle$,
with $a=[\eta]$. Also $H^2(T^4_{\varphi})= \langle e^{12}, e^{14}, e^{24}, e^{34}\rangle$.
In particular, notice that $\im(\varphi^*-\id)=\langle e^{14},e^{24}\rangle$.
Then $e^{14}\in\ker(\varphi^*-\id)$ and $e^{14}\in\im(\varphi^*-\id)$. So Theorem
\ref{thm:19} gives us the non-formality of $T^4_{\varphi}$.

For higher dimensions, take
$M=N\times (S^2)^{\ell}$, where $\ell\geq 0$. Then
$\dim M=5+2\ell$ and $b_1(M)=1$.
\end{proof}

\begin{remark}
Let us show that the $5$-manifold $T^4_{\varphi}$ is not a solvmanifold, that is,
it cannot be written as a quotient of a simply-connected solvable Lie group
by a discrete cocompact subgroup\footnote{If we
define solvmanifold as a quotient $\Gamma\backslash G$, where $G$
is a simply-connected solvable Lie
group and $\Gamma \subset G$ is a closed (not necessarily discrete) subgroup,
then any mapping torus $N_\varphi$, where $N$ is a nilmanifold is of this type
(see \cite{Mos}).}. The fiber bundle
 $$
 T^4\longrightarrow T^4_{\varphi}\longrightarrow S^1
 $$
gives a short exact sequence at the level of fundamental groups,
\begin{equation}\label{fundamental_groups}
0\longrightarrow \Z^4\longrightarrow H \longrightarrow \Z\longrightarrow 0,
\end{equation}
where $H=\pi_1(T^4_{\varphi})$. Since $\Z$ is free and $\Z^4$ is abelian,
one has $H=\Z\ltimes\Z^4$. Now suppose that
$T^4_{\varphi}$ is a solvmanifold of the form $\Gamma\backslash G$.
Clearly, it is $\Gamma \cong H$.
According to \cite{Mos}, we have a fibration
$$
N\longrightarrow T^4_{\varphi}\longrightarrow T^k
$$
where $N$ is a nilmanifold and $T^k$ is a $k$-torus. Since
$b_1(T^4_{\varphi})=1$, we have $k=1$ and $N$ is
a $4$-dimensional nilmanifold. This gives another short exact sequence of groups
$$
0\longrightarrow \Delta\longrightarrow \Gamma \longrightarrow \Z\longrightarrow 0,
$$
where $\Delta=\pi_1(N)$. 
But we know that there is a unique surjection
$H_{1}(\Gamma)=\Z\oplus T  \longrightarrow \Z$ (where $T$ is a torsion group)
and that, composed with the natural surjection 
$\Gamma  \longrightarrow {\Gamma}/{[\Gamma,\Gamma]} = H_{1}(\Gamma)$,
this gives a unique homomorphism $\Gamma  \longrightarrow \Z$.
Hence, the extension $ \Delta\longrightarrow \Gamma \longrightarrow \Z$
is the same as (\ref{fundamental_groups}).
Therefore $\Delta=\Z^4$. 
The Mostow fibration of
$\Gamma\backslash G=T^4_{\varphi}$ coincides with the mapping torus bundle.
At the level of Lie groups, it must be $G=\bR\ltimes\bR^4$ with semidirect product
$$
(t,x)\cdot(t',x')=(t+t',x+f(t)x')
$$
with $f$ a $1$-parameter subgroup in $\mathrm{GL}(4,\bR)$, i.e., $f(t)=\exp(tg)$ for
some matrix $g$.
Moreover, $f(1)=\exp(g)=\varphi^*$. But $\varphi^*$ can not be the exponential
of a matrix. Indeed, if $g$ has real eigenvalues, then $\varphi^*$ has
positive eigenvalues. If $g$ has purely imaginary eigenvalues and diagonalizes,
so does $\varphi^*$. And if $g$ has complex conjugate eigenvalues but does
not diagonalize, then $\varphi^*$ has two Jordan blocks.
None of these cases occur.
\end{remark}

\begin{remark}
The example constructed in the proof of Proposition \ref{prop:3} can
be used to give another example of a $5$-dimensional non-formal co-symplectic manifold
with $b_1=1$ which is not a solvmanifold.

Take $N=T^4$ and $\varphi:N \to N$ satisfying (\ref{eqn:eqn}). We may arrange
that $\varphi$ fixes the neighborhood of a point $p\in N$. Take the (symplectic)
blow-up of $N$ at $p$, $\widetilde{N} = N \# \overline{\mathbb{C}P} \hskip 0.05 cm^2$, and the 
induced symplectomorphism $\tilde\varphi:\widetilde{N}\to \widetilde{N}$. Let
$M=\widetilde{N}_{\tilde\varphi}$ be the corresponding mapping torus.
Clearly, $M$ is co-symplectic, it has $b_1(M)=1$ and the eigenvalue $\lambda=1$ 
of $\varphi^*:H^2(\widetilde{N}) \to H^2(\widetilde{N})$ has multiplicity $2$, hence
$M$ is non-formal. But $M$ cannot be a solvmanifold since $\pi_2(M)=\pi_2(\widetilde{N})=\Z$
is non-trivial.
\end{remark}

%
%

\section{A non-formal solvmanifold of dimension $5$ with $b_1=1$} \label{sec:6}

In this section we show an example of a non-formal compact
co-symplectic\footnote{Recall that the definition of co-symplectic 
manifold in this paper differs from that used in other papers, such as \cite{FV}.} $5$-dimensional
solvmanifold $S$ with first Betti number $b_1(S)=1$. Actually, $S$ is the mapping torus of
a certain diffeomorphism $\varphi$ of a $4$-torus preserving the orientation, so
this example fits in the scope of Proposition \ref{prop:3}.

Let $\fg$ be the abelian Lie algebra of dimension 4.
Suppose $\fg=\langle e_1,e_2,e_3,e_4\rangle$, and take the symplectic form
$\omega=e^{14}+e^{23}$ on $\fg$,
where $\langle e^1, e^2, e^3, e^4\rangle$
is the dual basis for the dual space ${\fg}^*$ such that the first cohomology group
$H^1({\fg}^*)=\langle [e^1], [e^2],[e^3],[e^4]\rangle$.
Consider the endomorphism of $\fg$ represented, with respect to the chosen basis, by the matrix
$$
D=\begin{pmatrix}
-1 & 0 & 0 & 0\\ 0 & 1 & 0 & 0\\-1 & 0 & -1 & 0\\ 0 & -1 & 0 & 1
\end{pmatrix}.
$$
It is immediate to see that $D$ is an infinitesimal symplectic transformation.
Since $\fg$ is abelian, it is also a derivation. Applying Proposition \ref{prop:107}
we obtain a co-symplectic Lie algebra
$$
\fh=\bR\xi\oplus \fg
$$
with brackets defined by
$$
[\xi,e_1]=-e_1-e_3, \quad [\xi,e_2]=e_2-e_4, \quad [\xi,e_3]=-e_3 \quad\textrm{and}\quad [\xi,e_4]=e_4.
$$

One can check that $\fh=\langle e_1,e_2,e_3,e_4,e_5=\xi\rangle$ is
a completely solvable non-nilpotent Lie algebra. We denote
by $\langle \alpha_1, \alpha_2, \alpha_3, \alpha_4, \alpha_5\rangle$
the dual basis for $\mathfrak{h}^*$.
The Chevalley-Eilenberg complex of $\mathfrak{h}^*$ is
$$
(\bigwedge(\alpha_1,\ldots,\alpha_5),d)
$$
with differential $d$ defined by
\begin{align*}
 d\alpha_1 &=-\alpha_1\wedge\alpha_5, \\
 d\alpha_2 &=\alpha_2\wedge\alpha_5, \\
 d\alpha_3 &=-\alpha_1\wedge\alpha_5-\alpha_3\wedge\alpha_5, \\
 d\alpha_4 &=-\alpha_2\wedge\alpha_5+\alpha_4\wedge\alpha_5, \\
 d\alpha_5 &=0.
\end{align*}

Let $H$ be the simply connected and completely solvable Lie group of dimension $5$
consisting of matrices of the form
 $$
 a=\begin{pmatrix}
 e^{-{x_5}}&0&0&0&0&x_1 \\
 0&e^{x_5}&0&0&0&x_2\\
 -{x_5}e^{-{x_5}}&0&e^{-{x_5}}&0&0&x_3\\
 0&-x_{5}e^{x_{5}}&0&e^{x_{5}}&0&x_4\\
 0&0&0&0&1&{x_5} \\
 0&0&0&0&0&1
 \end{pmatrix},
 $$
where $x_i \in \bR$, for $1\leq i \leq 5$. Then a global system of
coordinates $\{x_i, 1\leq i\leq 5\}$ for $H$ is defined by
 $x_i(a)=x_i$,  and a standard
calculation shows that a basis for the left invariant $1$-forms
on $H$ consists of
 $$
 \alpha_1=e^{x_5}dx_1, \quad \alpha_2=e^{-{x_5}}dx_2, \quad \alpha_3={x_5}e^{x_{5}}dx_1+e^{x_{5}}dx_3,
 \quad \alpha_4= {x_5}e^{-{x_5}}dx_2+e^{-{x_5}}dx_4, \quad \alpha_5=dx_5.
 $$

This means that $\fh$ is the Lie algebra of $H$.
 We notice that the Lie group $H$ may be described as a semidirect product
 $H=\bR \ltimes_{\rho} {\bR}^4$, where $\bR$ acts on ${\bR}^4$ via the linear transformation
 $\rho(t)$ of  ${\bR}^4$ given by the matrix
 $$
\rho(t)=\begin{pmatrix}
 e^{-t}&0&0&0\\
 0&e^{t}&0&0\\
 -{t}e^{-t}&0&e^{-t}&0\\
 0&-{t}e^{t}&0&e^{t}
 \end{pmatrix}.
 $$
Thus the operation on the group $H$ is given by
  $$
  \mathbf{a}\cdot\mathbf{x}=(a_1+x_1e^{-{a_5}},a_2+x_2 e^{a_5},a_3+x_3e^{-a_5}-a_5x_1e^{-{a_5}},a_4+x_4e^{a_5}-a_5x_2e^{a_5},a_5+x_5).
  $$
 where $\mathbf{a}=(a_1,\ldots,a_5)$ and similarly for $\mathbf{x}$. Therefore
 $H=\bR \ltimes_{\rho} {\bR}^4$, where $\bR$ is a connected abelian subgroup, and
 ${\bR}^4$ is the nilpotent commutator subgroup.

  Now we show that there exists a discrete subgroup $\Gamma$ of $H$ such that
  the quotient space
  $\Gamma\backslash H$ is compact. To construct $\Gamma$
  it suffices to find some real number $t_0$ such that the matrix defining
  $\rho(t_0)$ is conjugate to an element $A$ of the special linear group
  $\mathrm{SL}(4,\Z)$ with distinct real eigenvalues $\lambda$ and ${\lambda}^{-1}$.
  Indeed, we could then find a lattice $\Gamma_0$ in  ${\bR}^4$ which is
  invariant under ${\rho(t_0)}$, and take
  $\Gamma=(t_0 {\Z})\ltimes_{\rho}\Gamma_0$. To this end,
  we choose the matrix $A\in\mathrm{SL}(4,\Z)$
  given by
  \begin{equation}\label{eqn:A-matrix}
    A=\begin{pmatrix}
 2&1&0&0 \\
 1&1&0&0\\
2&1&2&1\\
 1&1&1&1
 \end{pmatrix},
  \end{equation}
with double eigenvalues $\frac{3+\sqrt{5}}{2}$ and $\frac{3-\sqrt{5}}{2}$. Taking
  $t_0=\log(\frac{3+\sqrt{5}}{2})$, we have that the matrices $\rho(t_0)$
  and $A$ are conjugate. Indeed, put
    \begin{equation}\label{eqn:P-matrix}
   P=\begin{pmatrix}
 1&\frac{-2(2+\sqrt{5})}{3+\sqrt{5}}&0&0 \\
 1&\frac{1+\sqrt{5}}{3+\sqrt{5}}&0&0\\
0&0&\log(\frac{2}{3+\sqrt{5}})&\frac{2(2+\sqrt{5})\log(\frac{3+\sqrt{5}}{2})}{3+\sqrt{5}}\\
 0&0&\log(\frac{2}{3+\sqrt{5}})&-\frac{(1+\sqrt{5})\log(\frac{3+\sqrt{5}}{2})}{3+\sqrt{5}}
 \end{pmatrix},
  \end{equation}
 then a direct calculation shows that $PA=\rho(t_0)P$.
 So the lattice $\Gamma_0$ in  ${\bR}^4$ defined by
 $$
 \Gamma_0 = P(m_1, m_2, m_3, m_4)^{t},
 $$
where $m_1, m_2, m_3, m_4 \in \Z$ and  $(m_1, m_2, m_3, m_4)^{t}$ is the transpose
of the vector   $(m_1, m_2, m_3, m_4)$, is invariant under the subgroup $t_0 {\Z}$.
Thus $\Gamma=(t_0 {\Z})\ltimes_{\rho}\Gamma_0$ is a cocompact subgroup of $H$.

We denote by  $S=\Gamma\backslash H$ the compact quotient manifold.
Then $S$ is a $5$-dimensional (non-nilpotent) completely solvable solvmanifold.

Alternatively, $S$ may be viewed as the total space of a
$T^4$-bundle over the circle $S^1$. In fact, let $T^4 = \Gamma_0 \backslash {\bR}^4$ be the
$4$-dimensional torus and $\varphi \colon {\Z} \rightarrow \mathrm{Diff}(T^4)$ the representation
defined as follows: $\varphi(m)$ is the transformation of $T^4$ covered by the linear
transformation of ${\bR}^4$ given by the matrix
$$
\rho(mt_0)= \begin{pmatrix}
 e^{-mt_{0}}&0&0&0\\
 0&e^{mt_{0}}&0&0\\
 -mt_{0}e^{-mt_{0}}&0&e^{-mt_{0}}&0\\
 0&{-mt_{0}}e^{mt_{0}}&0&e^{mt_{0}}
 \end{pmatrix}.
$$
So $\Z$ acts on $T^4\times \bR$ by
 $$
 ((x_1,x_2,x_3,x_4),x_5) \mapsto (\rho(mt_0)\cdot(x_1,x_2,x_3,x_4)^t, x_5+m),
 $$
 and $S$ is the quotient $(T^4\times \bR)/\Z$. The projection $\pi$ is given by
 $$
 \pi[(x_1, x_2, x_3, x_4), x_5] = [x_5].
 $$

\begin{remark} \label{rem:solvmanifold-is-mapping-torus}
We notice that
$S$ is a mapping torus associated to a certain symplectomorphism
$\Phi:T^4\to T^4$. Indeed, since $D$ is an infinitesimal symplectic transformation,
its exponential $\exp(tD)$ is a $1$-parameter group of symplectomorphisms of $\bR^4$.
Notice that $\exp(tD)=\rho(t)$. We saw that there exists a number $t_0\in\bR$
such that $\rho(t_0)$ preserves a lattice $\Gamma_0\cong\Z^4\subset\bR^4$.
Therefore the symplectomorphism $\rho(t_0)$ descends to a symplectomorphism
$\Phi$ of the $4$-torus $\Gamma_0\backslash\bR^4$, whose mapping torus is precisely $\Gamma\backslash H$.
\end{remark}

Next, we compute the real cohomology of $S$. Since $S$ is completely solvable, Hattori's theorem \cite{Hattori} says that the de Rham cohomology
ring $H^*(S)$ is isomorphic to the cohomology ring $H^*({\fh}^*)$ of the Lie algebra
$\fh$ of $H$. For simplicity we denote the left invariant
forms $\{\alpha_i\}$, $i=1,\ldots, 5$, on $H$ and their projections on $S$ by the same symbols.
Thus, we obtain

\begin{itemize}
\item $H^0(S)=\langle 1\rangle$,
\item $H^1(S)=\langle [\alpha_5]\rangle$,
\item $H^2(S)=\langle [\alpha_1\wedge\alpha_2], [\alpha_1\wedge\alpha_4+\alpha_2\wedge\alpha_3]\rangle$,
\item $H^3(S)=\langle [\alpha_3\wedge\alpha_4\wedge\alpha_5], [(\alpha_1\wedge\alpha_4+\alpha_2\wedge\alpha_3)\wedge\alpha_5\rangle$,
\item $H^4(S)=\langle [\alpha_1\wedge\alpha_2\wedge\alpha_3\wedge\alpha_4] \rangle$,
\item $H^5(S)=\langle [\alpha_1\wedge\alpha_2\wedge\alpha_3\wedge\alpha_4\wedge \alpha_5]\rangle$.
\end{itemize}

The product $H^1(S)\otimes H^2(S)\to H^3(S)$ is given by
 $$
  [\alpha_1\wedge\alpha_4+\alpha_2\wedge\alpha_3]\wedge  [\alpha_5] =
  [(\alpha_1\wedge\alpha_4+\alpha_2\wedge\alpha_3)\wedge\alpha_5] \quad \text{and}
  \quad  [\alpha_1\wedge\alpha_2] \wedge [\alpha_5]=0.
  $$

\begin{theorem}\label{thm:upgraded-4}
$S$ is a compact co-symplectic $5$-manifold which is non-formal and
with first Betti number $b_1(S)=1$.
\end{theorem}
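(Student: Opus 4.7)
The plan is to verify the three assertions of the theorem in turn, leaning on Remark~\ref{rem:solvmanifold-is-mapping-torus} which identifies $S=\Gamma\backslash H$ with the mapping torus $T^{4}_{\Phi}$ of a symplectomorphism $\Phi$ of a flat $4$-torus. Compactness is built into the construction of $\Gamma$ as a cocompact discrete subgroup of $H$. For the co-symplectic structure I would take the left-invariant pair $\eta=\alpha_{5}$, $F=\alpha_{1}\wedge\alpha_{4}+\alpha_{2}\wedge\alpha_{3}$ produced by Proposition~\ref{prop:107}, check $d\eta=dF=0$ directly from the displayed structure equations of $\fh^{*}$, and observe that $\eta\wedge F^{2}$ is a nonzero multiple of the volume form. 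The equality $b_{1}(S)=1$ is already visible in the computation $H^{1}(S)=\la[\alpha_{5}]\ra$ obtained through Hattori's theorem.

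For non-formality, which is the only nontrivial point, I would apply Theorem~\ref{thm:19} to $N=T^{4}$ and $\varphi=\Phi$ with $p=2$. The induced action of $\Phi^{*}$ on $H^{\bullet}(T^{4})$ is conjugate to that of the integer matrix $A$ on $\bigwedge^{\bullet}\bR^{4}$. Since $A$ is block lower triangular with two copies of $B=\bigl(\begin{smallmatrix}2&1\\1&1\end{smallmatrix}\bigr)$ on the diagonal, its characteristic polynomial is $\bigl((\lambda-\mu_{1})(\lambda-\mu_{2})\bigr)^{2}$ with $\mu_{1,2}=\tfrac{3\pm\sqrt{5}}{2}$ and $\mu_{1}\mu_{2}=1$. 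Consequently $\lambda=1$ is \emph{not} an eigenvalue of $\Phi^{*}_{1}$, while on $H^{2}(T^{4})=\bigwedge^{2}H^{1}(T^{4})$ it arises from the product $\mu_{1}\mu_{2}$ and occurs with algebraic multiplicity $4$.

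What remains is the main point: showing that $\lambda=1$ has multiplicity at least $2$ in the minimal polynomial of $\Phi^{*}_{2}$. Rather than compute the Jordan form of $\bigwedge^{2}A$ by hand, I would shortcut via Lemma~\ref{lem:18'}: since $\Phi^{*}_{1}-\id$ is invertible one has $C^{1}=0$, whence $\dim K^{2}=\dim H^{2}(S)=2$ from the already computed cohomology of $S$. Thus the geometric multiplicity of $\lambda=1$ on $H^{2}(T^{4})$ is $2$, strictly smaller than the algebraic multiplicity $4$, forcing $\Phi^{*}_{2}$ to admit a Jordan block of size $\geq 2$ for $\lambda=1$. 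Theorem~\ref{thm:19} then delivers a non-vanishing triple Massey product $\la[\eta],[\eta],[\tilde\alpha]\ra$ on $S$, and non-formality follows.
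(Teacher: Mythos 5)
Your argument is correct, but it proves non-formality by a genuinely different route from the proof given in the paper. The paper's proof works entirely on the solvmanifold side: it writes down the minimal model of $S$ explicitly through degree $2$, namely $\bigwedge(a)\otimes\bigwedge(b_1,b_2,b_3,b_4)$ with $db_3=a\cdot b_2$, $db_4=a\cdot b_3$, and observes that $b_4\cdot a$ is closed but not exact, so $S$ fails to be $2$-formal. You instead pass to the mapping-torus description of Remark~\ref{rem:solvmanifold-is-mapping-torus} and invoke the Massey-product machinery of Theorem~\ref{thm:19}; this is essentially the alternative argument the authors themselves sketch in the remark \emph{following} Theorem~\ref{thm:upgraded-4}, except that they compute the full $6\times 6$ matrix of $\varphi^*$ on $H^2(T^4)$ and read off its Jordan type (blocks of sizes $1$ and $3$), whereas you avoid that computation with a clean dimension count: algebraic multiplicity $4$ of $\lambda=1$ on $\bigwedge^2$ versus geometric multiplicity $\dim K^2=\dim H^2(S)=2$ (via Lemma~\ref{lem:18'} and $C^1=0$), forcing a Jordan block of size $\ge 2$. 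That shortcut is legitimate and not circular, since the Hattori computation of $H^*(S)$ is independent input. Two small points to tidy up. First, Theorem~\ref{thm:19} is stated for $\lambda=1$ of multiplicity exactly two in the minimal polynomial, while here the multiplicity is in fact $3$; your argument only establishes multiplicity $\ge 2$. This is harmless — the proof of Theorem~\ref{thm:19} uses only that $\ker(\varphi^*-\id)\subsetneq\ker(\varphi^*-\id)^2$, so it applies verbatim for multiplicity $\ge 2$, or you can instead cite Corollary~\ref{mapping-torus-p-formal} with $r\ge 2$, which is exactly what the paper's remark does via Theorem~\ref{thm:minimal-model-mapping-torus}. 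Second, what each approach buys: the paper's minimal-model computation is self-contained and also exhibits the precise failure of $2$-formality (and, in a further remark, a non-zero quadruple Massey product), while your route produces a concrete non-vanishing triple Massey product $\la[\eta],[\eta],[\tilde\alpha]\ra$ with less computation and makes clear that the example fits into the general framework of Proposition~\ref{prop:3}.
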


\begin{proof}
Take the 1-form $\eta=\alpha_5$, and let $F$ be the 2-form on $S$ given by
$$
F= \alpha_1\wedge\alpha_4+\alpha_2\wedge\alpha_3.
$$
Then $(F, \eta)$ defines a co-symplectic structure on $S$ since $dF=d\eta=0$ and
$\eta\wedge F^2\not=0$.

We prove the non-formality of $S$ from its minimal model \cite{OT}.
The minimal model of $S$ is a differential graded algebra
$(\mathcal {M},d)$, with
 $$
 {\mathcal M}=\bigwedge(a) \otimes \bigwedge(b_1,b_2,b_3,b_4) \otimes \bigwedge V^{\geq 3},
 $$
where the generator $a$ has
degree $1$, the generators $b_i$ have degree 2, and $d$ is given
by $da=db_1=db_2=0$, $db_3= a\cdot b_2$, $db_4=a\cdot b_3$.
The morphism $\rho\colon {\mathcal M} \to \Omega^*(S)$, inducing
an isomorphism on cohomology, is defined by
 \begin{align*}
 \rho(a) &=\alpha_5, \\
 \rho(b_1) &= \alpha_1\wedge\alpha_4+\alpha_2\wedge\alpha_3,\\
 \rho(b_2) &=\alpha_1 \wedge \alpha_2,\\
 \rho(b_3) &=\frac12 (\alpha_1\wedge \alpha_4 - \alpha_2\wedge\alpha_3),\\
 \rho(b_4) &= \frac12 \alpha_{3}\wedge \alpha_4.
 \end{align*}

Following the notations in Definition \ref{def:primera}, we have $C^1=\la a\ra$
and $N^1=0$, thus {\em $S$ is $1$-formal}. We see that {\em $S$ is
not $2$-formal}. In fact, the element $b_4 \cdot a \in N^2 \cdot
V^1$ is closed but not exact, which implies that $({\mathcal M},d)$ is
not $2$-formal. Therefore, $({\mathcal M},d)$ is not formal.
\end{proof}

\begin{remark}
It can be seen that $S$ is non-formal by computing a \emph{quadruple}
Massey product \cite{OT}
$\la [\alpha_1\wedge\alpha_2], [\alpha_5], [\alpha_5],[\alpha_5] \ra$.
As $\alpha_1\wedge\alpha_2\wedge\alpha_5= \frac12 d(\alpha_1\wedge \alpha_4 - \alpha_2\wedge\alpha_3)$
and $(\alpha_1\wedge \alpha_4 - \alpha_2\wedge\alpha_3)\wedge\alpha_5=d(\alpha_{3}\wedge \alpha_4)$,
we have
 $$
 \la [\alpha_1\wedge\alpha_2], [\alpha_5], [\alpha_5],[\alpha_5] \ra
 = \frac12 [\alpha_3\wedge\alpha_4\wedge\alpha_5].
 $$
This is easily seen to be non-zero modulo the indeterminacies.
\end{remark}

\begin{remark}
Theorem \ref{thm:upgraded-4} can be also proved
with the techniques of section \ref{sec:5}. By
Remark \ref{rem:solvmanifold-is-mapping-torus},
$S$ is the mapping torus of a diffeomorphism $\rho(t_0)$ of
$T^4=\Gamma_0\backslash \bR^4$.
Conjugating by the matrix $P$ in (\ref{eqn:P-matrix}), we have that $S$ is the
mapping torus of $A$ in (\ref{eqn:A-matrix}) acting on the standard $4$-torus
$T^4=\Z^4\backslash \bR^4$. The action of $A$ on $1$-forms leaves
no invariant forms, so $b_1(S)=1$. The action of $A$ on $2$-forms is
given by the matrix
 $$
 \begin{pmatrix} 1 & 0 &0& 0& 0& 0 \\
 0& 4& 2& 2& 1& 0 \\ 1 & 2& 2& 1& 1& 0 \\ -1& 2& 1& 2& 1&
    0 \\ 0& 1& 1& 1& 1& 0 \\ 1& 0& 1& -1& 0& 1 \end{pmatrix},
 $$
with respect to the basis $\{e^{12},e^{13}, e^{14}, e^{23}, e^{24}, e^{34} \}$.
This matrix has eigenvalues $\lambda= \frac12 (7 \pm 3\sqrt{5})$ (simple) and
$\lambda=1$, with multiplicity $3$ (one block of size $1$ and another of size $3$).
Theorem \ref{thm:minimal-model-mapping-torus} implies the
non-formality of $S$.
\end{remark}

\begin{remark}
We notice that the previous example $S$ may be generalized to
dimension $2n+1$ with $n\geq 2$. For this, it is enough to consider
the $(2n+1)$-dimensional completely solvable Lie group
$H^{2n+1}$ defined by the structure equations
\begin{itemize}
\item $d\alpha_j=(-1)^j\alpha_j\wedge\alpha_{2n+1}$, $j=1,\ldots,2n-2$;
\item $d\alpha_{2n-1}=-\alpha_1\wedge\alpha_{2n+1}-\alpha_{2n-1}\wedge\alpha_{2n+1}$;
\item $d\alpha_{2n}=-\alpha_2\wedge\alpha_{2n+1}+\alpha_{2n}\wedge\alpha_{2n+1}$;
\item $d\alpha_{2n+1}=0$.
\end{itemize}
The co-symplectic structure $(\eta, F)$ is defined by $\eta=\alpha_{2n+1}$,
and
$F= \alpha_1\wedge\alpha_{2n}+\alpha_2\wedge\alpha_{2n-1}+\alpha_3\wedge\alpha_4
+\cdots+\alpha_{2n-3}\wedge\alpha_{2n-2}$.
\end{remark}

\bigskip

\vskip.3cm

\noindent {\bf Acknowledgments.} We are very grateful to the referee for useful comments
that helped to improve the paper. The first and third authors were partially supported
by Project MICINN (Spain) MTM2010-17389. The second author was partially
supported through Project MICINN (Spain) MTM2011-28326-C02-02, and Project
of UPV/EHU ref.\ UFI11/52.

\smallskip

{\small

\end{document}